\documentclass[a4paper,12pt]{amsart}
\pagestyle{plain}

\usepackage{verbatim}
\usepackage{mathtools}
\usepackage{amsmath}
\usepackage{enumitem}
\usepackage{cite}
\setenumerate[1]{label=(\alph{*}), ref=(\alph{*})}
\setenumerate[2]{label=(\roman{*}), ref=(\roman{*})}

\newtheorem{theorem}{Theorem}[section]
\newtheorem{lemma}[theorem]{Lemma}
\newtheorem{proposition}[theorem]{Proposition}
\newtheorem{corollary}[theorem]{Corollary}
\newtheorem{ques}[theorem]{Question}

\theoremstyle{definition}
\newtheorem{definition}[theorem]{Definition}
\newtheorem{example}[theorem]{Example}

\theoremstyle{remark}
\newtheorem{remark}[theorem]{Remark}

\numberwithin{equation}{section}

\def\N{{\mathbb N}}
\def\R{{\mathbb R}}

\newcommand{\setN}{\{1,\ldots,n\}}

\DeclareMathOperator{\linspan}{span}
\DeclareMathOperator{\spann}{span}

\DeclareMathOperator{\supp}{supp}
\newcommand{\HB}{\text{H{\kern -0.35em}B}}

\newcommand{\Lip}{\textnormal{Lip}}

\newcommand{\spanof}{{\rm
span} \ }

\newcommand{\xast}{x^{\ast}}

\newcommand{\yast}{y^{\ast}}

\newcommand{\Xast}{X^{\ast}}

\newcommand{\Xastast}{X^{\ast\ast}}

\newcommand{\n}[1]{\left\|#1\right\|}



\begin{document}

\title{Symmetric strong diameter two property}
\date{}

\author{Rainis Haller, Johann Langemets, Vegard Lima, and Rihhard Nadel}
\address{Institute of Mathematics, University of Tartu, J.~Liivi 2, 50409 Tartu, Estonia}
\curraddr{}
\email{rainis.haller@ut.ee, johann.langemets@ut.ee, nadel@ut.ee}

\address[V.~Lima] {NTNU, Norwegian University of Science and
  Technology, Aalesund, Postboks 1517, N-6025 {\AA}lesund Norway.}
\curraddr{Department of Engineering Sciences, University of Agder,
Postboks 422, 4604 Kristiansand, Norway.}
\email{Vegard.Lima@uia.no}

\thanks{R.~Haller, J.~Langemets, and R.~Nadel were partially supported by institutional research funding IUT20-57 of the Estonian Ministry of Education and Research.}


\subjclass[2010]{Primary 46B20, 46B22}

\keywords{strong diameter 2 property, almost square spaces, Lipschitz spaces}

\date{}

\dedicatory{}

\begin{abstract}
We study Banach spaces with the property that, given
a finite number of slices of the unit ball,
there exists a direction such that
all these slices contain a line segment of
length almost 2 in this direction.
This property was recently named the
\emph{symmetric strong diameter two property}
by Abrahamsen, Nygaard, and P{\~o}ldvere.

The symmetric strong diameter two property
is not just formally stronger than
the strong diameter two property
(finite convex combinations of slices have
diameter 2).
We show that the symmetric strong diameter
two property is only preserved by $\ell_\infty$-sums,
and working with weak star slices we show that
$\Lip_0(M)$ have the weak star version of
the property for several classes of metric spaces $M$.
\end{abstract}

\maketitle

\section{Introduction}\label{sec: Introduction}

All Banach spaces considered in this paper are nontrivial and over
the real field. The closed unit ball of a Banach space $X$ is denoted
by $B_X$ and its unit sphere by $S_X$. The dual space of $X$ is
denoted by $X^\ast$ and the bidual by $X^{\ast\ast}$.
By a \emph{slice} of $B_X$ we mean a set of the form
\begin{equation*}
  S(B_X, x^*,\alpha) :=
  \{
  x \in B_X : x^*(x) > 1 - \alpha
  \},
\end{equation*}
where $x^* \in S_{X^*}$ and $\alpha > 0$.
If $X$ is a dual space, then slices
whose defining functional comes from (the canonical image of) the
predual of $X$ are called weak$^\ast$ slices.

This research belongs to the area of diameter 2 properties, which is a recent topic in geometry of Banach spaces and has received intensive attention in the last years (see \cite{{ALL},ALMN,ALN, ALN2, ANPssd2p,ABGLP2012,BLR_octahedralityLipschitz,MR3466077, becerra_guerrero_2016,HL1,HLN,haller_duality_2015,Ivakhno,PR_CharacterizationOH}). Its central research objects are Banach spaces where certain subsets of the unit ball (slices, relatively weakly open subsets or convex combinations of slices) all have diameter equal to 2. Different subsets under consideration led to particular diameter properties. 

Recall from \cite{ALL} that a Banach space $X$
is \emph{almost square} (ASQ) if whenever $n \in \mathbb{N}$,
and $x_1,\dotsc,x_n \in S_X$, there exists a sequence
$(y_k)_{k=1}^\infty \subset S_X$ such that
$\|x_i \pm y_k\| \to_k 1$ for every $i \in \setN$. If a Banach space $X$ is ASQ, then every finite convex combination
of slices of $B_X$ has diameter two \cite[Proposition~2.5]{ALL}, that is, $X$ has the \emph{strong diameter 2 property} (SD2P).  Spaces which are ASQ include $c_0(X_n)$,
where $X_n$ are arbitrary Banach spaces,
and Banach spaces $X$ which are M-ideals
in $X^{**}$ (see \cite{ALL}).

In this paper we investigate the following property, which first appeared in \cite{ALN},
but was singled out and studied in \cite{ANPssd2p}.
\begin{definition}
  A Banach space $X$ has the
  \emph{symmetric strong diameter 2 property} (SSD2P) if for every
  finite family $\{S_i\}_{i=1}^n$ of slices of $B_X$ and
  $\varepsilon > 0$, there exist $x_i \in S_i$ and $y \in B_X$,
  independent of $i$, such that $x_i \pm y \in S_i$ for every
  $i \in \setN$ and $\|y\| > 1 - \varepsilon$.
\end{definition}

It is known \cite[Lemma~4.1]{ALN} that if a Banach
space has the SSD2P, then it has the
SD2P. In fact, the SSD2P is strictly stronger than
the SD2P. For example, $L_1[0,1]$ has the SD2P,
but not the SSD2P
(see Remark~\ref{rem:L1-not-SSD2P} below). On the other hand, ASQ Banach spaces have the SSD2P (this can easily be observed from Theorem~\ref{ssd2p:equiv_forms}~\ref{item:ssd2p-char-d} below). The converse fails, $C[0,1]$ has the SSD2P (see Example~\ref{ex: C[0,1] SSD2P}) and is not ASQ (this can be easily seen by considering the constant 1 function).


The following classes of spaces have the SSD2P:
  \begin{enumerate}
  \item Lindenstrauss spaces (this follows by inspecting the proof of Proposition~4.6 in \cite{ALN2});
  \item uniform algebras (see Theorem~4.2 in \cite{ALN});
  \item ASQ-spaces, in particular, Banach spaces which are M-ideals in their bidual (see \cite{ALL});
  \item Banach spaces with an infinite-dimensional centralizer (this follows by inspecting the proof of Proposition~3.3 in \cite{ABGLP2012});
  \item somewhat regular linear subspaces of $C_0(L)$, whenever $L$ is an infinite locally compact Hausdorff topological space  \cite{ANPssd2p};
  \item M\"untz spaces (this follows by inspecting the proof of Theorem~2.5 in \cite{ALMN}).
  \end{enumerate}
All of the above-listed spaces contain an almost isometric copy of $c_0$. However, we do not know whether every space with the SSD2P contains $c_0$. On the other hand, every Banach space containing a copy of $c_0$ can
be equivalently renormed to have the SSD2P, in fact even to be ASQ (see \cite{MR3466077}).

Let us summarize the results of the paper.
We start our investigation in Section~\ref{sec: Characterization} by giving equivalent formulations of the SSD2P, which are often more convenient to use. 
Recently in \cite{HLN}, it was proven that the SD2P is preserved by a lot of absolute normalized norms. However, in Section~\ref{sec: Direct sums}, we show that the only direct sums of Banach spaces that can have the SSD2P are the $\ell_\infty$-sums. In Section~\ref{sec: Subspaces}, we prove that the SSD2P passes down from a superspace if a subspace is an ai-ideal in it or if the quotient is strongly regular. For the other way, we show that if a subspace is an M-ideal in the superspace then the SSD2P will lift from the subspace to the superspace. In Section~\ref{sec: Lipschitz}, we study the SSD2P for Lipschitz spaces. At the end we list some open problems.




\section{Characterization of the SSD2P}\label{sec: Characterization}

Let $\mathcal{O}(x)$ denote
the set of all relatively weakly open neighborhoods
of $x$ in $B_X$.

\begin{theorem}\label{ssd2p:equiv_forms}
  Let $X$ be a Banach space. The following assertions are equivalent:
  \begin{enumerate}
  \item\label{item:ssd2p-char-a}
    $X$ has the SSD2P.
  \item\label{item:ssd2p-char-b}
    Whenever $n \in \mathbb{N}$, $U_1, \ldots, U_n$
    are relatively weakly open subsets of $B_X$
    and $\varepsilon > 0$,
    there exist $x_i \in U_i$, $i \in \setN$,
    and $y \in B_X$ such that $x_i \pm y \in U_i$ for
    every $i \in \setN$ and $\|y\| > 1 - \varepsilon$.
  \item\label{item:ssd2p-char-c}
    Whenever $n \in \mathbb{N}$, $C_1, \ldots, C_n$
    are finite convex combinations of slices of $B_X$
    and $\varepsilon > 0$,
    there exist $x_i \in C_i$, $i \in \setN$,
    and $y \in B_X$ such that $x_i \pm y \in C_i$ for
    every $i \in \setN$ and $\|y\| > 1 - \varepsilon$.
  \item\label{item:ssd2p-char-d}
    Whenever $n \in \mathbb{N}$,
    $x_1, \dots, x_n \in S_X$,
    there exist nets
    $(y^i_\alpha) \subset S_X$ and
    $(z_\alpha) \subset S_X$ such that
    $y^i_\alpha \to_\alpha x_i$ weakly, $z_\alpha \to_\alpha 0$
    weakly, and $\|y^i_\alpha \pm z_\alpha\|\rightarrow 1$
    for all $i \in \setN$.
  \item\label{item:ssd2p-char-e}
    Whenever $n \in \mathbb{N}$,
    $x_1, \ldots, x_n \in S_X$,
    $U_i \in \mathcal{O}(x_i)$, $i \in \setN$,
    $V \in \mathcal{O}(0)$, and $\varepsilon > 0$,
    there exist $y_i \in U_i \cap S_X$, $i \in \setN$,
    and $z \in V \cap S_X$ such that
    $\|y_i \pm z\| \leq 1 + \varepsilon$.
  \end{enumerate}
\end{theorem}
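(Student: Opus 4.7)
My plan is to prove the cycle (a) $\Rightarrow$ (c) $\Rightarrow$ (b) $\Rightarrow$ (d) $\Rightarrow$ (e) $\Rightarrow$ (a). The reverse implications (b) $\Rightarrow$ (a) and (c) $\Rightarrow$ (a) are trivial, since slices are themselves relatively weakly open and are trivial convex combinations of slices.

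For (a) $\Rightarrow$ (c), decompose each $C_i = \sum_j \lambda_{ij} S_{ij}$ and apply the SSD2P to the pooled family of slices $\{S_{ij}\}_{i,j}$, producing points $x_{ij} \in S_{ij}$ and a common $y$ with $\|y\| > 1-\varepsilon$ and $x_{ij} \pm y \in S_{ij}$. Setting $x_i := \sum_j \lambda_{ij} x_{ij}$ places $x_i \in C_i$ and $x_i \pm y \in C_i$ by convexity. For (c) $\Rightarrow$ (b) I invoke Bourgain's lemma: every nonempty relatively weakly open subset of $B_X$ contains a convex combination of slices; picking such a $C_i \subset U_i$ for each $i$ and feeding the $C_i$'s into (c) immediately gives (b).

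For (b) $\Rightarrow$ (d), I index a net by tuples $\alpha = (U_1, \ldots, U_n, V, \varepsilon)$, where $U_i$ is a weak neighborhood of $x_i$ in $B_X$, $V$ is a weak neighborhood of $0$ in $B_X$, and $\varepsilon > 0$, ordered by refinement. Applying (b) yields $w_i \in U_i$, $w_0 \in V$, and $y \in B_X$ with $\|y\| > 1-\varepsilon$ and $w_i \pm y \in U_i$, $w_0 \pm y \in V$. I set $y^i_\alpha := w_i$ and $z_\alpha := y$. The identity $2z_\alpha = (w_0+y) - (w_0-y) \in V - V$, combined with continuity of subtraction in the weak topology, forces $z_\alpha \to 0$ weakly; the convergence $y^i_\alpha \to x_i$ weakly is immediate. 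Since $w_i \pm y \in B_X$, weak lower semicontinuity of the norm applied to $y^i_\alpha \pm z_\alpha \to x_i$ gives $\|y^i_\alpha \pm z_\alpha\| \to 1$, and similarly $\|y^i_\alpha\|, \|z_\alpha\| \to 1$, so normalization onto $S_X$ preserves all relevant limits. The equivalence (d) $\Leftrightarrow$ (e) is the standard translation between net convergence and weak neighborhood bases. For (e) $\Rightarrow$ (a), given $S_i = S(B_X, f_i, \alpha_i)$, pick $x_i \in S_X$ with $f_i(x_i) > 1 - \alpha_i/2$, let $U_i := \{z \in B_X : f_i(z) > 1 - \alpha_i/2\} \in \mathcal{O}(x_i)$, choose $V \in \mathcal{O}(0)$ so that $|f_j(z)| < \alpha_j/4$ for every $j$ and every $z \in V$, apply (e) with a small $\varepsilon'$, and rescale by $1/(1+\varepsilon')$; a routine check then shows the slice inequalities hold for $\varepsilon'$ small enough.

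The most delicate step is (b) $\Rightarrow$ (d): the symmetry $w_0 \pm y \in V$ is what allows us to express $2y$ as a difference of two elements in a shrinking weak neighborhood of $0$ and thereby conclude $z_\alpha \to 0$ weakly, while the subsequent normalization must be handled so that weak convergence of the nets and the asymptotic unit norms of the sums and differences are all preserved simultaneously.
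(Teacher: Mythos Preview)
Your proposal is correct and uses essentially the same ingredients as the paper's proof: Bourgain's lemma to pass between slices, convex combinations of slices, and weakly open sets; the trick of adjoining an extra weak neighborhood $V$ of $0$ so that the symmetry $w_0\pm y\in V$ forces $y$ into a shrinking weak neighborhood of $0$; and normalization/rescaling to land on $S_X$ or back in $B_X$. The only difference is organizational---the paper runs the cycle (a)$\Rightarrow$(b) (via Bourgain) $\Rightarrow$(e)$\Rightarrow$(d)$\Rightarrow$(a), whereas you route through (a)$\Rightarrow$(c)$\Rightarrow$(b)$\Rightarrow$(d)$\Leftrightarrow$(e)$\Rightarrow$(a)---and the paper uses convexity of $\tilde V$ (writing $z=\tfrac12(-v+z)+\tfrac12(v+z)$) rather than your subtraction $2y\in V-V$, but these are equivalent observations.
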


\begin{proof}
  \ref{item:ssd2p-char-a} $\Rightarrow$ \ref{item:ssd2p-char-b}.
  Let $n \in \mathbb{N}$ and assume that $U_1, \ldots, U_n$
  are relatively weakly open subsets of $B_X$
  and that $\varepsilon > 0$.
  By Bourgain's lemma \cite[Lemma~II.1]{GGMS}
  each $U_i$ contains a convex combination of slices,
  say $U_i \supset \sum_{j=1}^{n_i} \lambda_i^j S_i^j$,
  with $\sum_{j=1}^{n_i} \lambda_i^j = 1$ and $\lambda_i^j > 0$,
  for each $i \in \setN$.
We apply the definition of the SSD2P to the family of all $S_i^j$ to find $x_i^j \in S_i^j$ and $y \in B_X$
  such that $x_i^j \pm y \in S_i^j$ and
  $\|y\| > 1 - \varepsilon$.
Set $w_i := \sum_{j=1}^{n_i} \lambda_i^j x_i^j$. Then
  \begin{equation*}
    w_i \in \sum_{j=1}^{n_i} \lambda_i^j S_i^j \subset U_i
  \end{equation*}
and
  \begin{equation*}
    w_i \pm y = \sum_{j=1}^{n_i} \lambda_i^j (x_i^j \pm y)
    \in \sum_{j=1}^{n_i} \lambda_i^j S_i^j \subset U_i.
  \end{equation*}
 This shows \ref{item:ssd2p-char-a} $\Rightarrow$ \ref{item:ssd2p-char-b}. The same proof also gives
  \ref{item:ssd2p-char-a} $\Rightarrow$ \ref{item:ssd2p-char-c},
  while
  \ref{item:ssd2p-char-b} $\Rightarrow$ \ref{item:ssd2p-char-a}
  and
  \ref{item:ssd2p-char-c} $\Rightarrow$ \ref{item:ssd2p-char-a}
  are trivial.

  \ref{item:ssd2p-char-b} $\Rightarrow$ \ref{item:ssd2p-char-e}.
  Let $n \in \mathbb{N}$, $x_1, \dots, x_n \in S_X$,
  $U_i \in \mathcal{O}(x_i)$, $i \in \setN$, $V \in \mathcal{O}(0)$,
  and $\varepsilon\in (0,1)$.
  By choosing $0 < \delta < \varepsilon$ small enough
  there exist finite sets
  $A_i \subset S_{X^*}$ and $B \subset S_{X^*}$
  such that
  \begin{equation*}
    U_i \supset \bar{U}_i := \{
    x \in B_X : |x^*(x - x_i)| < \delta, x^* \in A_i
    \}
  \end{equation*}
  and
  \begin{equation*}
    V \supset \bar{V} := \{
    x \in B_X : |x^*(x)| < \delta, x^* \in B
    \}.
  \end{equation*}
  Let
  \begin{equation*}
    \tilde{U}_i := \{
    x \in B_X : |x^*(x - x_i)| < \delta/2, x^* \in A_i
    \} \subset \bar{U}_i
  \end{equation*}
  and
  \begin{equation*}
    \tilde{V} := \{
    x \in B_X : |x^*(x)| < \delta/2, x^* \in B
    \} \subset \bar{V}.
  \end{equation*}
  For each $i \in \setN$,
  choose $x_i^* \in S_{X^*}$ such that $x_i^*(x_i) = 1$,
  and define $S_i := S(B_X, x_i^*,\delta/2)$.
  We apply \ref{item:ssd2p-char-b} to the
  relatively weakly open sets
  $W_i = S_i \cap \tilde{U}_i$ and $\tilde{V}$
  and find $w_i \in W_i$ and $v \in \tilde{V}$
  and $z \in B_X$ such that 
  \[
  w_i \pm z \in W_i,\quad
  v \pm z \in \tilde{V},\quad \text{ and }\quad \|z\| > 1 - \frac{\delta}2.
  \]

  Define $u_i := \frac{w_i}{\|w_i\|}$.
  Since $w_i \in S_i$ we get
  \[
 \|w_i\| > 1 - \frac{\delta}2\quad
  \text{ and }\quad \|u_i - w_i\| < \frac{\delta}2.
  \]
  From this and $w_i \in \tilde{U}_i$ we have $u_i \in U_i$.

  Next, note that $-(v \pm z) \in \tilde{V}$
  hence $z = \frac{1}{2}(-v + z) + \frac{1}{2}(v + z) \in
  \tilde{V}$ by convexity.
  Since $\|z\| > 1 - \delta/2$ we get that
  $y := \frac{z}{\|z\|} \in V$.

  Finally, note that
  \begin{equation*}
    \|u_i \pm y\| \le \|w_i - u_i\|
    + \|w_i \pm z\| + \|z - y\|
    \le \frac{\delta}{2} + 1 + \frac{\delta}{2}
    < 1 + \varepsilon.
  \end{equation*}

  \ref{item:ssd2p-char-d} $\Rightarrow$ \ref{item:ssd2p-char-a}. Let $n\in\N$,  $S_1:= S(B_X,\xast_1,\alpha_1),\dots, S_n:=S(B_X,\xast_n,\alpha_n)$ be slices of $B_X$ and $\varepsilon\in (0,1)$. Find a $\delta>0$ such that 
  \[
 \frac1{1+\delta}>1-\varepsilon\quad \text{ and }\quad  \frac{1-2\delta}{1+\delta}>1-\alpha_i
  \]
  for every $i\in\setN$.  For every $i$ choose an $x_i\in S_X\cap S(B_X,\xast_i,\delta)$. By \ref{item:ssd2p-char-d} there are nets $(y^i_\alpha) \subset S_X$ and
    $(z_\alpha) \subset S_X$ such that
    $y^i_\alpha \to_\alpha x_i$ weakly, $z_\alpha \to_\alpha 0$
    weakly, and $\|y^i_\alpha \pm z_\alpha\|\rightarrow 1$
    for all $i \in \setN$. 
    
    Find an index $\alpha_0$ such that 
    \[
    y^i_{\alpha_0}\in S(B_X,\xast_i,\delta),\quad  \|\xast_i(z_{\alpha_0})\|<\delta,\quad \text{ and }\quad \max \|y^i_{\alpha_0}\pm z_{\alpha_0}\|\leq 1+\delta
    \]
    for every $i\in \setN$.
    Finally, set $y_i:=y^i_{\alpha_0}/(1+\delta)$ and $z:=z_{\alpha_0}/(1+\delta)$. Then we have $y_i, y_i\pm z\in S_i$ and $z\in B_X$ with $\|z\|>1-\varepsilon$.
    
    The implication
  \ref{item:ssd2p-char-e} $\Rightarrow$ \ref{item:ssd2p-char-d}
  is straightforward.
\end{proof}

From Theorem~\ref{ssd2p:equiv_forms}~\ref{item:ssd2p-char-d} one can quickly verify that $C[0,1]$ has the SSD2P.

\begin{example}\label{ex: C[0,1] SSD2P}
  $C[0,1]$ has the SSD2P.
  Let $(f_i)_{i=1}^n \subset S_{C[0,1]}$.
  Choose $a > 0$ with $2a n < 1$
  and points $(x_i)_{i=1}^n \subset [0,1]$
  such that $|f_i(x_i)| = 1$.

  The set $R = [0,1] \setminus \cup_{i=1}^n (x_i-a,x_i+a)$
  is nonempty and contains a sequence $(U_k)^{\infty}_{k=1}$ of
  pairwise disjoint subsets.

  Let $g_k \ge 0$ be such that $\supp g_k \subseteq U_k$
  and $\|g_k\| = 1$.
  Then $(g_k)^{\infty}_{k=1}$ is a bounded sequence that
  converges pointwise to $0$, hence $g_k \to 0$ weakly.

  Define $y_{i,k} = (1-g_k)f_i$ and $z_k = g_k$.
  Clearly $y_{i,k} \to f_i$ weakly and $\|y_{i,k}\| = 1$
  since $x_i \notin U_m$. Also $y_{i,k}$ and $g_k$
  have disjoint support so that $\|y_{i,k} \pm z_k\| = 1$.
\end{example}


\section{Direct sums with the SSD2P}\label{sec: Direct sums}


We recall that a norm $N$ on $\mathbb R^2$ is \emph{absolute} (see
\cite{bonsall_numerical_1973}) if
\begin{equation*}
  N(a,b) = N(|a|,|b|) \qquad \text{for all}
  \ (a,b) \in \mathbb{R}^2
\end{equation*}
and \emph{normalized} if
\begin{equation*}
  N(1,0) = N(0,1) = 1.
\end{equation*}
For $1 \leq p \leq \infty$, we denote the $\ell_p$-norm on
$\mathbb{R}^2$ by $\|\cdot\|_p$.
Every norm $\|\cdot\|_p$ is absolute and normalized.
Moreover, if $N$ is an absolute normalized norm on
$\mathbb{R}^2$
(see \cite[Lemmata~21.1 and 21.2]{bonsall_numerical_1973}), then
\begin{equation*}
  \|\cdot\|_\infty \leq N(\cdot) \leq \|\cdot\|_1
\end{equation*}
and if $(a,b),(c,d) \in \mathbb{R}^2$ and
\begin{equation*}
  |a|\leq |c|\quad \text{and}\quad |b|\leq |d|,
\end{equation*}
then
\begin{equation*}
  N(a,b)\leq N(c,d).
\end{equation*}


If $X$ and $Y$ are Banach spaces and $N$ is an absolute normalized
norm on $\R^2$, then we denote by $X\oplus_N Y$ the product space
$X\times Y$ with respect to the norm
\begin{equation*}
  \|(x,y)\|_N = N(\|x\|,\|y\|)
  \qquad
  \text{for all}\ x \in X
  \ \text{and}\ y \in Y.
\end{equation*}
In the special case where $N$ is the $\ell_p$-norm,
we write $X \oplus_p Y$.

We will now prove that the $\ell_\infty$-norm is the only absolute normalized norm, which preserves the SSD2P.

\begin{theorem}\label{thm: direct sums with SSD2P}
  Let $X$ and $Y$ be Banach spaces.
  \begin{itemize}
  \item[(a)] $X \oplus_\infty Y$ has the SSD2P if and only if $X$ or $Y$ has the SSD2P.
  \item[(b)] If $N$ is an absolute
  normalized norm different from the $\ell_\infty$-norm,
  then $X\oplus_N Y$ does not have the SSD2P.
  \end{itemize}
\end{theorem}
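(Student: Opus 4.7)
Plan for part (a). I would prove both implications directly from the definition of the SSD2P. For $(\Leftarrow)$, say $X$ has the SSD2P; given slices of $B_{X\oplus_\infty Y}$ with defining unit functionals $(x_i^*, y_i^*) \in S_{X^*\oplus_1 Y^*}$ (so $\|x_i^*\|+\|y_i^*\|=1$) and depths $\alpha_i$, I partition the indices into $I_B = \{i : \|x_i^*\| > \eta\}$ and $I_A = \{i : \|x_i^*\| \le \eta\}$ for $\eta$ small compared to $\min_i \alpha_i$. On $I_B$ I apply the SSD2P of $X$ to the rescaled slices $S(B_X, x_i^*/\|x_i^*\|, \alpha_i/2)$ to obtain $\tilde a_i$ and a common direction $u \in B_X$ with $\|u\| > 1-\varepsilon$, and set $(a_i, b_i) = (\tilde a_i, b_i)$ with $b_i \in B_Y$ nearly norming $y_i^*$; on $I_A$ I set $(a_i, b_i) = (0, b_i)$. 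The sum-direction $(u, 0)$ has $\ell_\infty$-norm $\|u\|$, and a routine calculation balancing $\eta$ and $\delta$ against $\min_i \alpha_i$ verifies that $(a_i, b_i), (a_i \pm u, b_i) \in S_i$. For $(\Rightarrow)$ I argue by contrapositive: if slices in $B_X$ and $B_Y$ witness the failure of the SSD2P in $X$ and $Y$ with parameters $\varepsilon_1, \varepsilon_2$, I lift them to slices of $B_{X\oplus_\infty Y}$ via the functionals $(x_k^*, 0)$ and $(0, y_l^*)$ (each of $\ell_1$-norm one), and apply the SSD2P of the sum with tolerance $\varepsilon < \tfrac12 \min(\varepsilon_1, \varepsilon_2)$; the resulting $(u, v)$ satisfies $\max(\|u\|, \|v\|) > 1-\varepsilon$, while the two failure hypotheses separately force $\|u\| \le 1-\varepsilon_1$ and $\|v\| \le 1-\varepsilon_2$, a contradiction.

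Plan for part (b). Setting
\[
s_0 = \sup\{s \in [0,1] : N(1, s) \le 1\}, \qquad t_0 = \sup\{t \in [0,1] : N(t, 1) \le 1\},
\]
the whole argument reduces to the two-dimensional inequality $N(t_0, s_0) < 1$. Since $N \ne \|\cdot\|_\infty$ but $N \ge \|\cdot\|_\infty$, the alternative $N(1,1) = 1$ would already force $N = \|\cdot\|_\infty$, so $N(1,1) > 1$ and hence $s_0, t_0 < 1$. The crux is the convex-combination identity
\[
(t_0, s_0) = (1-t_0)(1-s_0)(0,0) + t_0(1, s_0) + s_0(1-t_0)(0,1),
\]
which together with $N(0,0)=0$, $N(1, s_0) = N(0, 1) = 1$, and the triangle inequality yields
\[
N(t_0, s_0) \le 1 - (1-t_0)(1-s_0) < 1.
\]

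With this in hand, assume towards a contradiction that $X \oplus_N Y$ has the SSD2P. Fix $x_0^* \in S_{X^*}$, $y_0^* \in S_{Y^*}$ and consider the slices $S_1 = \{(a,b) \in B_{X \oplus_N Y} : x_0^*(a) > 1-\alpha\}$ and $S_2 = \{(a,b) \in B_{X \oplus_N Y} : y_0^*(b) > 1-\alpha\}$, which are genuine slices since $(x_0^*, 0)$ and $(0, y_0^*)$ have dual norm one. The SSD2P gives $(a_i, b_i) \in S_i$ and a common direction $(u, v)$ with $N(\|u\|, \|v\|) > 1-\varepsilon$. From $(a_1 \pm u, b_1 \pm v) \in B_{X \oplus_N Y}$, averaging the two values of $N$ and using convexity of $N$ together with $\|a_1\| \le \tfrac12(\|a_1+u\|+\|a_1-u\|)$ and $\|v\| \le \tfrac12(\|b_1+v\|+\|b_1-v\|)$, then monotonicity, I obtain $N(\|a_1\|, \|v\|) \le 1$; since $\|a_1\| > 1-\alpha$, this forces $\|v\| \le \phi(1-\alpha)$, where $\phi(t) := \sup\{s : N(t, s) \le 1\}$ is the (continuous) boundary function of $B_N$ and $\phi(1-\alpha) \to s_0$ as $\alpha \to 0$. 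The symmetric argument on $S_2$ gives $\|u\| \le \tilde\phi(1-\alpha) \to t_0$. Hence for $\alpha$ sufficiently small, $N(\|u\|, \|v\|) \le N(\tilde\phi(1-\alpha), \phi(1-\alpha))$ is arbitrarily close to $N(t_0, s_0) < 1$, contradicting $N(\|u\|, \|v\|) > 1-\varepsilon$ for small enough $\varepsilon$. The main obstacle is precisely the strict inequality $N(t_0, s_0) < 1$: monotonicity alone yields only $\le 1$, and examples with a staircase-shaped unit ball show that this strictness cannot be obtained from the two slices $S_1, S_2$ by any purely local perturbation; the slack $(1-t_0)(1-s_0) > 0$ must be harvested globally by including the origin, which lies strictly inside $B_N$, in a convex combination expressing $(t_0, s_0)$.
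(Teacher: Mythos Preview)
Your proposal is correct in both parts, and both take routes that differ from the paper's argument.

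For part~(a), the paper exploits the equivalent formulation of the SSD2P via relatively weakly open sets (Theorem~\ref{ssd2p:equiv_forms}\ref{item:ssd2p-char-b}): since $B_{X\oplus_\infty Y}=B_X\times B_Y$ and the weak topology on a product is the product topology, every weakly open subset of $B_{X\oplus_\infty Y}$ contains a box $U_i\times V_i$, and one simply moves in the $X$-coordinate. This sidesteps the case split entirely. Your direct slice argument with the partition $I_A\cup I_B$ is the standard workaround when one does not have the weakly-open characterization at hand; it is heavier bookkeeping but has the advantage of being self-contained (no appeal to Bourgain's lemma). For the converse you lift the witnessing slices of $X$ and of $Y$ separately via $(x_k^\ast,0)$ and $(0,y_l^\ast)$, whereas the paper forms the products $U_i\times V_j$; both reach the same contradiction on $\max(\|u\|,\|v\|)$, and your version is arguably tidier.

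For part~(b) the geometric core is genuinely different. The paper works on the diagonal: it fixes $a\in(0,1)$ with $N(a,a)=1$, observes that $N(1,a)>1$ and $N(a,1)>1$, and uses two slices to force $\|u\|,\|v\|<a-\delta$, whence $N(\|u\|,\|v\|)<1$. You instead isolate the extremal parameters $s_0=\sup\{s:N(1,s)\le1\}$ and $t_0=\sup\{t:N(t,1)\le1\}$ and prove the sharp inequality
\[
N(t_0,s_0)\le 1-(1-t_0)(1-s_0)<1
\]
by writing $(t_0,s_0)$ as a convex combination of $(0,0)$, $(1,s_0)$ and $(0,1)$. Combined with the convexity/monotonicity estimate $N(\|a_1\|,\|v\|)\le1$ and left-continuity of the boundary function $\phi$ at~$1$, this closes the argument. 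Your approach yields an explicit quantitative gap and treats norms that are not symmetric in the two coordinates without any extra work; the paper's diagonal trick is shorter but hides the asymmetric case inside the (true, but not entirely obvious) claim that $N(1,a)>1$.
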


\begin{proof}
(a). Assume first that $X$ has the SSD2P and denote by $Z:=X \oplus_\infty Y$. For every $i \in \setN$ let $W_i$ be a nonempty relatively weakly open subset of $B_Z$ containing the element $(u_i,v_i)$, and $\varepsilon > 0$. Find nonempty relatively weakly open subsets $U_i\subset B_X$ and $V_i\subset B_Y$ such that
  \[
  (u_i,v_i)\in U_i\times V_i\subset W_i.  
  \]
Since $X$ has the SSD2P, by Theorem~\ref{ssd2p:equiv_forms}~\ref{item:ssd2p-char-b}, we can find $x_i\in U_i$ and $x\in B_X$ such that $x_i, x_i\pm x\in U_i$ and $\|x\|>1-\varepsilon$. Set $z_i=(x_i,v_i)$ and $z=(x,0)$. Then $z_i, z_i\pm z\in W_i$ and $\|z\|>1-\varepsilon$, which completes the proof.

Assume now that $X \oplus_\infty Y$ has the SSD2P. Suppose for contradiction that $X$ and $Y$ both fail to have the SSD2P. 

Since $X$ fails the SSD2P, there are nonempty relatively weakly open subsets $U_1,\dots,U_n\in B_X$ and an $\varepsilon>0$ such that for all $x_i\in U_i$ and for all $x\in B_X$ with $\|x\|>1-\varepsilon$ there is an index $i_0$ such that $x_{i_0}+x\notin U_{i_0}$ or $x_{i_0}-x\notin U_{i_0}$. Also, there are nonempty relatively weakly open subsets $V_1,\dots,V_m\in B_Y$ and a $\delta>0$ such that for all $y_j\in V_j$ and for all $y\in B_Y$ with $\|y\|>1-\delta$ there is an index $j_0$ such that $y_{j_0}+y\notin V_{j_0}$ or $y_{j_0}-y\notin V_{j_0}$.

Set $W_{ij}:= U_i\times V_j$ for every $i\in\setN$ and $j\in\{1,\dots,m\}$. Then each $W_{ij}$ is a nonempty relatively open subset of $B_{X \oplus_\infty Y}$ and by our assumption there should be $(x_{i_0}, y_{j_0})\in W_{i_0j_0}$ and $(x,y)\in B_Z$ such that $(x_{i_0}, y_{j_0})\pm (x,y)\in W_{i_0j_0}$ and $\|(x, y)\|>1-\max\{\delta,\varepsilon\}$, which is impossible. 

(b). Denote $Z:=X \oplus_N Y$. Note that $N(1,1) > 1$, because $N$ differs from the $\ell_\infty$-norm. Let $a \in (0,1)$ be such that $N(a,a) = 1$.
  Since $N(a,1) > 1$ and $N(1,a) > 1$,
  there is a $\delta > 0$ such that if $N(u,v) \leq 1$
  and $u > 1- \delta$, then $v < a-\delta$ or if $v > 1- \delta$,
  then $u < a - \delta$.
  Fix an $\varepsilon > 0$ with
  $a - \delta \leq (1 - \varepsilon)a$.

  Consider slices $S_1 := S(B_{Z}, (\xast,0), \delta)$
  and $S_2 := S(B_{Z}, (0,\yast), \delta)$.
  Suppose for contradiction that $Z$ has the SSD2P,
  then there are $z_1 = (x_1,y_1) \in S_1$,
  $z_2 = (x_2,y_2) \in S_2$, and $w = (u,v) \in B_Z$ such that
  \begin{equation*}
    z_1 \pm w \in S_1, \quad z_2 \pm w \in S_2,
    \quad \text{ and }\quad \|w\| > 1 - \varepsilon. 
  \end{equation*}
  Therefore $(\xast,0)(z_1 \pm w) = \xast(x \pm u) > 1 - \delta$,
  which implies that $\|x_1 \pm u\| > 1 - \delta$.
  Similarly we have that $\|y_2 \pm v\| > 1 - \delta$.
  Hence $\|y_1 \pm v\| < a - \delta$ and
  $\|x_2 \pm u\| < a - \delta$.
  Now we see that
  \begin{equation*}
    \|v\| \leq \frac{1}{2}(\|y_1 + v\| + \|y_1 - v\|)
    < a - \delta.
  \end{equation*}
  Similarly, one has that $\|u\| < a - \delta$. Thus
  \begin{align*}
    1-\varepsilon &<\|w\|= N(\|u\|, \|v\|)\\
                   & \leq N(a - \delta, a - \delta) \\
                  &\leq N((1 - \varepsilon)a, (1 - \varepsilon)a)\\
                  &= (1 - \varepsilon) N(a,a)
                    = 1 - \varepsilon,
  \end{align*}
  a contradiction.
\end{proof}

\begin{remark}
 Note that Theorem~\ref{thm: direct sums with SSD2P} implies that almost squareness is also preserved only by $\ell_\infty$-sums.
\end{remark}

\begin{remark}\label{rem:L1-not-SSD2P}
  By Theorem~\ref{thm: direct sums with SSD2P}, $L_1[0,1]$ does not have
  the SSD2P, because $L_1[0,1] =
  L_1[0,\frac{1}{2}] \oplus_1 L_1[\frac{1}{2},1]$.
\end{remark}

If $(X_n)_{n=1}^\infty$ is a sequence of Banach spaces,
then $\ell_\infty(X_n)$ is the Banach space of
bounded sequences $(x_n)_{n=1}^\infty$, where
$x_n \in X_n$, with norm $\|(x_n)\| = \sup_{n}\|x_n\|$.

\begin{proposition}
  Let $(X_n)_{n=1}^\infty$ be a sequence of Banach spaces.
  Then $\ell_\infty(X_n)$ has the SSD2P.
\end{proposition}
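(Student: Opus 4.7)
The plan is to verify property (a) directly, exploiting the $\ell_\infty$-structure of $Z := \ell_\infty(X_n)$. Given slices $S_i = S(B_Z, x_i^*, \alpha_i)$ for $i \in \setN$ and $\varepsilon > 0$, I want to produce a direction $y$ supported on a single coordinate $k$ that is deep in the sequence, together with elements $\hat{w}_i \in S_i$ that are zero on this coordinate. Then $\hat{w}_i \pm y$ will automatically lie in $B_Z$, and since $x_i^*$ is forced to be small on such a deep coordinate, $\hat{w}_i \pm y$ will stay inside the slice $S_i$.

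The key ingredient is a summability observation. Let $i_k : X_k \to Z$ denote the natural inclusion into the $k$-th coordinate and $\pi_k : Z \to X_k$ the corresponding projection. Then for every $x^* \in Z^*$,
\[
\sum_{k=1}^\infty \|x^* \circ i_k\|_{X_k^*} \le \|x^*\|.
\]
This follows by testing $x^*$ against elements $z := \sum_{k=1}^M \sigma_k i_k(v_k)$, where $v_k \in S_{X_k}$ nearly attains $\|x^* \circ i_k\|$ and $\sigma_k \in \{-1,+1\}$ are chosen so that every term in $x^*(z) = \sum_{k=1}^M \sigma_k (x^* \circ i_k)(v_k)$ is nonnegative; since $\|z\|_Z = 1$, taking $M \to \infty$ yields the bound. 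In particular $\|x^* \circ i_k\| \to 0$ as $k \to \infty$.

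Using this, choose $k$ so large that $\|x_i^* \circ i_k\|_{X_k^*} < \alpha_i/4$ for every $i \in \setN$. For each $i$ pick $w_i \in B_Z$ with $x_i^*(w_i) > 1 - \alpha_i/4$, and set $\hat{w}_i := w_i - i_k \pi_k w_i$, so that $\hat{w}_i$ agrees with $w_i$ on every coordinate except the $k$-th, where it is zero. Fix any $u_k \in S_{X_k}$ (which exists because $X_k$ is nontrivial) and set $y := i_k u_k$, so that $\|y\| = 1$. Since $\hat{w}_i \pm y$ equals $w_i$ on coordinates different from $k$ and equals $\pm u_k$ on the $k$-th, we have $\|\hat{w}_i\|, \|\hat{w}_i \pm y\| \le 1$. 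A direct estimate then gives
\[
x_i^*(\hat{w}_i \pm y) \ge x_i^*(w_i) - 2\,\|x_i^* \circ i_k\| > 1 - 3\alpha_i/4 > 1 - \alpha_i,
\]
so $\hat{w}_i, \hat{w}_i \pm y \in S_i$, as required.

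The only genuinely substantive step is the summability observation; the rest is direct verification, and notably no geometric hypothesis on any individual $X_k$ is needed — the $\ell_\infty$-structure does all the work.
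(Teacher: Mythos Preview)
Your proof is correct and rests on the same core observation as the paper's: any functional in $Z^*$ restricts to something in $\ell_1(X_n^*)$ on the canonical copies of the $X_k$, so one can place the common direction $y$ on a single coordinate $k$ where all the $x_i^*$ are negligibly small. The only difference is packaging: the paper verifies condition~\ref{item:ssd2p-char-d} of Theorem~\ref{ssd2p:equiv_forms} by observing that $(P_k(z))_k$ and $(P_k(u))_k$ are weakly null (via $c_0(X_n)^* = \ell_1(X_n^*)$), whereas you verify the defining property directly via your summability estimate; your route is marginally more self-contained since it avoids invoking the characterization theorem.
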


\begin{proof}
  Define $Z := \ell_\infty(X_n)$ and $Z_0:= c_0(X_n)$.
  Let $P_k\colon Z\rightarrow Z_0$, $z=(x_n)\mapsto P_k(z)=(0, \ldots, 0, x_k, 0, \ldots)$. Observe that $(P_k(z))_{k=1}^\infty$ is a weakly null sequence in $Z$, because it is a weakly null sequence in the subspace
  $Z_0$, where $Z_0^* = \ell_1(X_n^*)$.

  Let $z_1,\ldots,z_m \in S_Z$.
  Choose $u = (u_n)_{n=1}^\infty \in S_Z$ such that
  $\|u_n\| = 1$ for all $n$.
  Define
  \begin{equation*}
    y_k^i := z_i - P_k(z_i)
    \quad \text{and} \quad
    w_k := P_k(u).
  \end{equation*}
  Then $y_k^i \to_k z_i$ weakly and $w_k \to_k 0$ weakly
  since both $(P_k(z_i))_{k=1}^\infty$ and $(P_k(u))_{k=1}^\infty$ are weakly null.
  By definition $\|y_k^i \pm w_k\| = 1$.
  From Theorem~\ref{ssd2p:equiv_forms}~\ref{item:ssd2p-char-d}
  we see that $Z$ has the SSD2P.
\end{proof}


\section{Subspaces with the SSD2P}\label{sec: Subspaces}

We show that the SSD2P behaves similarly to the SD2P by passing to subspaces. 


Let $X$ be a Banach space and $Y$ a subspace of $X$. Following \cite{ALN2} we say that $Y$
is an \emph{almost isometric ideal} (ai-ideal)
in $X$ if for every finite-dimensional subspace
$E$ of $X$ and every $\varepsilon > 0$ there
exists a bounded linear operator $T: E \to Y$
such that
$(1-\varepsilon)\|e\| \le \|Te\| \le (1+\varepsilon)\|e\|$
and $Te = e$ for all $e \in E \cap Y$.

Recall that $\varphi : Y^* \to X^*$ is called a \emph{Hahn--Banach extension operator} if $\varphi(y^*)(y) = y^*(y)$ and $\|\varphi(y^*)\|
= \|y^*\|$ for all $y \in Y$ and $y^* \in Y^*$. Theorem~1.4 in \cite{ALN2} says that:
 \emph{If $Y$ is an ai-ideal in $X$, then there exists a Hahn--Banach extension operator $\varphi : Y^* \to X^*$ such that for every $\varepsilon>0$, every finite-dimensional subspace $E\subset X$ and every finite-dimensional subspace $F\subset Y^{\ast}$ there exists $T\colon E\rightarrow Y$ which satisfies:}
\begin{itemize}
\item[(a)] \emph{$Te=e$ for all $e\in E\cap Y.$}
\item[(b)] \emph{$(1 - \varepsilon) \|e\| \le \|Te\|
    \le (1 + \varepsilon) \|e\|$
    for all $e \in E$.}

\item[(c)] \emph{$\varphi(f)(e) = f(Te)$ for all $e \in E$
    and $f \in F$.}

\end{itemize}
The Principle of Local Reflexivity says that every Banach space is an ai-ideal in its bidual.

\begin{proposition}\label{prop:ai-inherit-ssd2p}
  Let $X$ be Banach space and $Y$ be its closed subspace. If $X$ has the SSD2P and $Y$ is an ai-ideal in $X$, then $Y$ has the SSD2P.
\end{proposition}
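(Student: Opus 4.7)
The plan is to transfer slices of $B_Y$ up to slices of $B_X$ via the Hahn--Banach extension operator, apply the SSD2P in $X$ there, and then push the resulting points back into $Y$ via the local operator supplied by the ai-ideal structure.

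More concretely: start with slices $S_i = S(B_Y, y_i^*, \alpha_i)$ of $B_Y$, $i \in \setN$, and $\varepsilon \in (0,1)$. Let $\varphi \colon Y^* \to X^*$ be the Hahn--Banach extension operator from Theorem~1.4 in \cite{ALN2}. Choose small auxiliary parameters $\delta, \eta > 0$ (to be fixed at the end), with $\delta < \min_i \alpha_i$ and $\eta$ much smaller than $\delta$ and $\varepsilon$. Apply the SSD2P of $X$ to the slices $\widetilde{S}_i := S(B_X, \varphi(y_i^*), \delta)$ and parameter $\eta$ to obtain $x_i \in \widetilde{S}_i$ and $w \in B_X$ such that $x_i \pm w \in \widetilde{S}_i$ and $\|w\| > 1 - \eta$.

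Next, set $E := \spann\{x_1, \ldots, x_n, w\} \subset X$ and $F := \spann\{y_1^*, \ldots, y_n^*\} \subset Y^*$, and apply Theorem~1.4 in \cite{ALN2} to obtain an operator $T \colon E \to Y$ with $(1-\eta)\|e\| \le \|Te\| \le (1+\eta)\|e\|$ and $\varphi(y_i^*)(e) = y_i^*(Te)$ for all $e \in E$. Define
\begin{equation*}
  y_i := \frac{T x_i}{1+\eta}, \qquad z := \frac{T w}{1+\eta}.
\end{equation*}
Since $x_i, x_i \pm w \in B_X$, the upper bound on $T$ gives $y_i, y_i \pm z \in B_Y$. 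The intertwining property yields
\begin{equation*}
  y_i^*(y_i \pm z) = \frac{\varphi(y_i^*)(x_i \pm w)}{1+\eta} > \frac{1-\delta}{1+\eta},
\end{equation*}
and choosing $\delta, \eta$ so that $(1-\delta)/(1+\eta) > 1 - \alpha_i$ for every $i$ gives $y_i, y_i \pm z \in S_i$. Finally, the lower bound on $T$ yields $\|z\| \ge (1-\eta)\|w\|/(1+\eta) > (1-\eta)^2/(1+\eta)$, so taking $\eta$ small enough from the start ensures $\|z\| > 1 - \varepsilon$.

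The only real work is the initial choice of the parameters $\delta$ and $\eta$: they must simultaneously (i) cut the slices $\widetilde{S}_i$ thin enough that, after losing a factor $1+\eta$ in the normalization, the pushed-down points still satisfy the slice inequalities for $S_i$, and (ii) be small enough that the distortion from $T$ does not kill the lower bound $\|z\| > 1 - \varepsilon$. There is no conceptual obstacle here beyond organizing this bookkeeping; the key structural input is property (c) of the operator $T$, which is precisely what allows slices defined by $\varphi(y_i^*)$ on $X$ to turn into slices defined by $y_i^*$ on $Y$ after applying $T$.
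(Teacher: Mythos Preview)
Your proof is correct and follows the same overall strategy as the paper: lift the defining functionals from $Y^*$ to $X^*$ via the Hahn--Banach extension operator $\varphi$, apply the SSD2P in $X$, and then push the witnesses back down to $Y$ using the local operator $T$ from \cite[Theorem~1.4]{ALN2}, with property~(c) of $T$ doing the essential work of converting $\varphi(y_i^*)$-slices in $X$ into $y_i^*$-slices in $Y$.

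The execution differs in one noteworthy respect. The paper argues via characterization~\ref{item:ssd2p-char-e} of Theorem~\ref{ssd2p:equiv_forms}: it starts from unit-sphere points $y_i$ with weakly open neighborhoods $U_i$ and a neighborhood $V$ of $0$, lifts these to neighborhoods in $X$, and normalizes by dividing by $\|Tx_i\|$ and $\|Tz\|$ so that the outputs land on $S_Y$; this also uses the fixing property~(a) of $T$ through the inclusion $y_i\in E\cap Y$. You instead work directly from the slice definition~\ref{item:ssd2p-char-a}, which lets you normalize uniformly by the scalar $1+\eta$ and avoids invoking Theorem~\ref{ssd2p:equiv_forms} altogether. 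Your route is therefore slightly more elementary and the bookkeeping is a bit tighter; the paper's route, by contrast, illustrates how the sphere-based reformulation~\ref{item:ssd2p-char-e} interacts naturally with almost-isometric local operators.
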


\begin{proof}
 Let $\varphi : Y^* \to X^*$ be a Hahn--Banach extension operator
  connected to the local projections.
  Let $y_1,\ldots,y_n \in S_Y$,
  $U_i \in \mathcal{O}(y_i)$ and $V \in \mathcal{O}(0)$
  in $Y$, and $\varepsilon > 0$.

  Let $\delta > 0$ be so small that
  $(1+\delta)^2 + \delta < 1 + \varepsilon$.
  By choosing $\delta$ even smaller if necessary
  there exist finite sets
  $A_i \subset S_{Y^*}$ and $B \subset S_{Y^*}$
  such that
  \begin{equation*}
    U_i \supset \bar{U}_i := \{
    y \in B_Y : |y^*(y - y_i)| < \delta, y^* \in A_i
    \}
  \end{equation*}
  and
  \begin{equation*}
    V \supset \bar{V} := \{
    y \in B_Y : |y^*(y)| < \delta, y^* \in B
    \}.
  \end{equation*}
  Define corresponding neighborhoods in $X$ by
  \begin{equation*}
    \tilde{U}_i := \{
    x \in B_X : |\varphi(y^*)(x - y_i)| < \frac{\delta}{2}, y^* \in A_i
    \}
  \end{equation*}
  and
  \begin{equation*}
    \tilde{V} := \{
    x \in B_X : |\varphi(y^*)(x)| < \frac{\delta}{2}, y^* \in B
    \}.
  \end{equation*}
  By Theorem~\ref{ssd2p:equiv_forms}~\ref{item:ssd2p-char-e}, there exist $x_i \in \tilde{U}_i \cap S_X$
  and $z \in \tilde{V} \cap S_X$ such that
  $\|x_i \pm z\| \le 1 + \delta$.

  Define $E := \linspan\{x_1,\dots,x_n,y_1,\dots,y_n,z\} \subset X$ and
  $F := \linspan(A_1\cup\ldots\cup A_n\cup B) \subset Y^*$.
  Both $E$ and $F$ are finite-dimensional.
  Since $Y$ is an ai-ideal in $X$ there exists
  a bounded linear operator $T : E \to Y$ such that
  \begin{enumerate}
  \item
    $Te = e$ for all $e \in E \cap Y$.
  \item
    $(1 - \frac{\delta}{2}) \|e\| \le \|Te\|
    \le (1 + \frac{\delta}{2}) \|e\|$
    for all $e \in E$.
  \item
    $\varphi(f)(e) = f(Te)$ for all $e \in E$
    and $f \in F$.
  \end{enumerate}
  Define $u_i := Tx_i/\|Tx_i\|$
  and $v := Tz/\|Tz\|$.
  Then $\|u_i - Tx_i\| \le \delta/2$
  and $\|v - Tz\| \le \delta/2$
  hence
  \begin{align*}
    |y^*(u_i - y_i)|
    &\le |y^*(Tx_i-y_i)| + \frac{\delta}{2}
    = |y^*(T(x_i - y_i))| + \frac{\delta}{2} \\
    &= |\varphi(y^*)(x_i - y_i)| + \frac{\delta}{2}
      < \frac{\delta}{2} + \frac{\delta}{2} = \delta
  \end{align*}
  and similarly $|y^*(v)| < \delta$.
  This means that $u_i \in U_i \cap S_Y$,
  for all $i \in \setN$,
  and $v \in V \cap S_Y$.
  Finally,
  \begin{align*}
    \| u_i \pm v \|
    &= \|Tx_i - u_i\| + \|T(x_i \pm z)\|
    + \|Tz - w\| \\
    &\le \frac{\delta}{2}
    + (1 + \frac{\delta}{2})\|x_i \pm z\|
    + \frac{\delta}{2}
      \le (1+\delta)^2 + \delta
      \le 1 + \varepsilon.
  \end{align*}
  From Theorem~\ref{ssd2p:equiv_forms}~\ref{item:ssd2p-char-e}
  we get that $Y$ has the SSD2P.
\end{proof}

\begin{corollary}\label{cor:ssd2p-inherit-from-bidual}
  If $X$ is a Banach space such that $X^{**}$
  has the SSD2P, then $X$ has the SSD2P.
\end{corollary}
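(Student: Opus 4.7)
The plan is to apply Proposition~\ref{prop:ai-inherit-ssd2p} directly, using the Principle of Local Reflexivity as the bridge. The proposition says that whenever $Y$ is an ai-ideal in a Banach space $Z$ with the SSD2P, then $Y$ itself has the SSD2P. So to deduce the corollary it suffices to exhibit $X$ as an ai-ideal in $X^{**}$.

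This is exactly what the Principle of Local Reflexivity provides, and it was stated in the excerpt just before Proposition~\ref{prop:ai-inherit-ssd2p}: \emph{every Banach space is an ai-ideal in its bidual}. More precisely, for every finite-dimensional subspace $E \subset X^{**}$ and every $\varepsilon > 0$, there is a bounded linear operator $T\colon E \to X$ with $(1-\varepsilon)\|e\| \le \|Te\| \le (1+\varepsilon)\|e\|$ for all $e \in E$ and $Te = e$ for all $e \in E \cap X$. This is precisely the defining property of an ai-ideal.

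Therefore, assuming $X^{**}$ has the SSD2P, we take $Z := X^{**}$ and $Y := X$ (identified with its canonical image in $X^{**}$) in Proposition~\ref{prop:ai-inherit-ssd2p} and conclude that $X$ has the SSD2P. There is no genuine obstacle here; the corollary is a one-line consequence of Proposition~\ref{prop:ai-inherit-ssd2p} together with the Principle of Local Reflexivity. The substantive content was already discharged in the proof of that proposition, where the Hahn--Banach extension operator associated to the ai-ideal structure was used to transfer weakly open neighborhoods and the symmetric element $y$ from the superspace down to the subspace.
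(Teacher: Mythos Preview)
Your argument is correct and is exactly the (implicit) proof intended by the paper: the corollary follows immediately from Proposition~\ref{prop:ai-inherit-ssd2p} together with the Principle of Local Reflexivity, which was recalled just before that proposition as the statement that every Banach space is an ai-ideal in its bidual.
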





Recall that a Banach space $X$ is \emph{strongly regular} if every closed and convex subset of $B_X$ has convex combinations of slices with arbitrarily small diameter. For a deeper discussion of strong regularity and related concepts we refer the reader to \cite{GGMS}.
\begin{proposition}\label{prop: strongly regular}
  Let $X$ be a Banach space and $Y$ a closed subspace. If $X$ has the
  SSD2P and $X/Y$ is strongly regular, then $Y$ has the SSD2P too. In
  particular, SSD2P passes down to finite co-dimensional subspaces.
\end{proposition}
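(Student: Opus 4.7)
The plan is to adapt the structure of the ai-ideal proof (Proposition~\ref{prop:ai-inherit-ssd2p}), using strong regularity of $X/Y$ in place of the Hahn--Banach extension operator with local projections. Given slices $S_i = S(B_Y, y_i^*, \alpha_i)$ of $B_Y$ and $\varepsilon > 0$, I would extend each $y_i^*$ to $x_i^* \in S_{X^*}$ by Hahn--Banach to obtain lifted slices $\tilde S_i = S(B_X, x_i^*, \alpha_i')$ in $B_X$, shrinking the parameters slightly to $\alpha_i' < \alpha_i$ to leave room for later approximation errors. Since $S_i \subset \tilde S_i \cap Y$ is nonempty, the closed convex subset $\overline{q(\tilde S_i)} \subset B_{X/Y}$ contains $0$, where $q \colon X \to X/Y$ denotes the quotient map.

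Strong regularity of $X/Y$ provides, for arbitrarily small $\gamma > 0$, a convex combination $E = \sum_j \mu_j T_j$ of slices of $B_{X/Y}$ with $\diam E < \gamma$. Symmetrizing about $0$, the set $\tfrac12 E + \tfrac12(-E)$ is still a convex combination of slices of $B_{X/Y}$ (the $T_j$ and the $-T_j$ with weights halved), contains $0$, and has diameter at most $\gamma$, so every element has norm at most $\gamma$. Each slice functional $z_j^* \in (X/Y)^*$ composes with $q$ to give $z_j^* \circ q \in Y^\perp$; pulling back therefore yields a convex combination of slices of $B_X$ whose elements all lie within $\gamma$ of $Y$. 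Combining the resulting half-space constraints (along the $\pm z_j^* \circ q$) with each lifted slice $\tilde S_i$ produces relatively weakly open subsets $\tilde S_i \cap H_{ij}$ of $B_X$ whose $q$-images lie in a small-norm region of $B_{X/Y}$.

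I would then apply the SSD2P of $X$ in the form Theorem~\ref{ssd2p:equiv_forms}~\ref{item:ssd2p-char-b} to the joint family $\{\tilde S_i \cap H_{ij}\}_{i,j}$ with parameter $\varepsilon/2$, obtaining $\tilde x_{ij}$ in each piece and a common $\tilde z \in B_X$ with $\|\tilde z\| > 1 - \varepsilon/2$ such that $\tilde x_{ij} \pm \tilde z$ also lie in the respective pieces. Averaging the $\tilde x_{ij}$ with the convex-combination weights yields $\tilde u_i \in \tilde S_i$ with $\tilde u_i \pm \tilde z \in \tilde S_i$, and by construction of the $H_{ij}$ both $\tilde u_i$ and $\tilde z$ lie within $O(\gamma)$ of $Y$. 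Choosing $y_i \in Y$ close to $\tilde u_i$ and $y \in Y$ close to $\tilde z$, rescaling so that $y \in B_Y$, and using the margin built into $\alpha_i' < \alpha_i$, one verifies $y_i \in S_i$, $y_i \pm y \in S_i$, and $\|y\| > 1 - \varepsilon$.

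The principal obstacle is the simultaneous control of both the \emph{centre} $\tilde u_i$ and the \emph{direction} $\tilde z$: a direct lift using SSD2P of $X$ with only a small-$q$-diameter convex combination forces $\tilde z$ close to $Y$ (as the half-difference of two $q$-images lying in a small-diameter set) but leaves $\tilde u_i$ potentially far from $Y$. The symmetrization step---available because $0 \in \overline{q(\tilde S_i)}$ thanks to $S_i \subset Y$---is what makes it possible to control both witnesses at once. The ``in particular'' clause follows because $X/Y$ of finite dimension has the Radon--Nikod\'ym property and is therefore strongly regular; moreover $B_{X/Y}$ is then norm-compact, which makes the refinement step considerably more direct.
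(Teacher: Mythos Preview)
The paper does not give its own proof; it simply says the argument is ``similar to the proof of \cite[Theorem~2.2~(iii)]{becerra_guerrero_2016}'' once one uses Theorem~\ref{ssd2p:equiv_forms}\,\ref{item:ssd2p-char-b}. Your outline is in the same spirit---lift the slices, use strong regularity of $X/Y$ to force the witnesses close to $Y$, apply the SSD2P of $X$ in the weakly-open form, and perturb back into $Y$---so strategically you are on track.

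There is, however, a genuine gap in the execution. You apply strong regularity to \emph{all} of $B_{X/Y}$, obtain slices $T_j$ of $B_{X/Y}$, pull them back to slices $H_j=S(B_X,z_j^*\circ q,\beta_j)$, and then form the intersections $\tilde S_i\cap H_j$. But nothing guarantees these intersections are nonempty: the functionals $z_j^*\circ q$ lie in $Y^\perp$, so $H_j$ contains no point of $Y$, while the only information you have about $\tilde S_i$ is that it meets $Y$. In an $\ell_1$-type geometry two such slices need not intersect at all, and the SSD2P of $X$ does not by itself rule this out. Without nonemptiness, Theorem~\ref{ssd2p:equiv_forms}\,\ref{item:ssd2p-char-b} cannot be invoked on the family $\{\tilde S_i\cap H_j\}$, and the averaging step that follows collapses.

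The remedy---and this is presumably how \cite{becerra_guerrero_2016} proceeds---is to apply strong regularity not to $B_{X/Y}$ but to each set $\overline{q(\tilde S_i)}$ separately. A relative slice $\{z\in\overline{q(\tilde S_i)}:f(z)>\sup f-\delta\}$ then lifts to a set of the form $\tilde S_i\cap\{x\in B_X:(f\circ q)(x)>\sup f-\delta\}$, which is \emph{automatically} a nonempty relatively weakly open subset of $B_X$ because the supremum of $f$ over $q(\tilde S_i)$ equals that over its closure. This repairs the nonemptiness issue. One still has to arrange that the resulting small-diameter convex combination is near $0$ (so that the centres $\tilde u_i$, not just the direction $\tilde z$, end up close to $Y$); here the fact that $0\in q(\tilde S_i)$ is used, but your symmetrization device as written does not transfer, since $\overline{q(\tilde S_i)}$ is not symmetric. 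You should revisit this point as well.
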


We omit the proof of Proposition~\ref{prop: strongly regular}, because using Theorem~\ref{ssd2p:equiv_forms}~\ref{item:ssd2p-char-b} it is similar to the proof of \cite[Theorem~2.2~(iii)]{becerra_guerrero_2016}.

Recall that a subspace $Y$ of a Banach space $X$ is an \emph{M-ideal} in $X$
if there exists a bounded linear projection $P : X^* \to X^*$ such
that $\ker P = Y^\perp$ and
\begin{equation*}
  \|x^*\| = \|Px^*\| + \|x^* - Px^*\|
\end{equation*}
for all $x^* \in X^*$.

\begin{proposition}\label{prop: M-ideal}
  Let $Y$ be a proper closed subspace of a Banach space $X$.
  If $Y$ is an M-ideal in $X$ and $Y$ has the SSD2P,
  then $X$ has the SSD2P.
\end{proposition}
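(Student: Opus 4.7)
My plan is to verify characterization~\ref{item:ssd2p-char-b} for $X$: given nonempty relatively weakly open subsets $U_1,\dots,U_n$ of $B_X$ and $\varepsilon>0$, I need to produce centers $x_i\in U_i$ together with a common direction $y\in B_X$ with $x_i\pm y\in U_i$ and $\|y\|>1-\varepsilon$. The idea is to take $y$ inside $Y$, exploiting the SSD2P of $Y$, and to obtain each center by perturbing an initial point $x_i^0\in U_i$ by an element of $Y$ supplied by the 3-ball characterization of M-ideals.

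First, I would shrink each $U_i$ to a basic neighborhood of the form $\{x\in B_X:|u_{i,j}^*(x-x_i^0)|<\delta,\ j=1,\dots,k_i\}$ with $u_{i,j}^*\in S_{X^*}$ and $\delta>0$ small. Let $v_{i,j}^*$ denote the restriction of $u_{i,j}^*$ to $Y$. Applying the SSD2P of $Y$ to the slices $S(B_Y,v_{i,j}^*/\|v_{i,j}^*\|,\eta)$ (for those $(i,j)$ with $v_{i,j}^*\neq 0$; the remaining pairs lie in $Y^\perp$ and cause no trouble) yields $y\in B_Y$ with $\|y\|>1-\varepsilon/2$ and $|v_{i,j}^*(y)|<\eta$ for every $(i,j)$ and any prescribed $\eta>0$. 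This is the same extraction of a ``small direction'' used implicitly in the proof of Theorem~\ref{ssd2p:equiv_forms}.

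With $y$ fixed, for each $i$ I would invoke the 3-ball characterization of the M-ideal $Y\subset X$ with the triple $\{-y,0,y\}\subset B_Y$ and the point $x_i^0\in B_X$ to produce $y_i'\in Y$ satisfying
\[
\|x_i^0-y_i'\|\le 1+\eta\qquad\text{and}\qquad\|x_i^0\pm y-y_i'\|\le 1+\eta.
\]
Setting $\tilde x_i:=(x_i^0-y_i')/(1+\eta)$ and $\tilde y:=y/(1+\eta)$ then gives elements of $B_X$ with $\|\tilde x_i\pm\tilde y\|\le 1$ and $\|\tilde y\|>1-\varepsilon$, as long as $\eta$ is small enough.

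The hard part is ensuring that $\tilde x_i$ still lies in $U_i$, which, unwinding the definitions, reduces to an estimate of the form $|u_{i,j}^*(y_i')|<\eta'$. Since $y$ has already been arranged to be small on each $u_{i,j}^*$, and the convex set of admissible perturbations $y_i'$ supplied by the 3-ball property is typically much larger than a single point, I expect one can refine the choice of $y_i'$ inside this set so as to achieve the additional weak smallness on the $u_{i,j}^*$. This refinement is the technical heart of the argument: it exploits the full strength of $Y$ being an M-ideal rather than merely an ai-ideal, and amounts to applying a quantitative strengthening of the 3-ball property in which, alongside the usual norm inequalities, one prescribes suitable bounds on finitely many functional values.
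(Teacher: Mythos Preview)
Your proposal has a genuine gap at exactly the place you flag as the ``technical heart''. The refinement you ask for---choosing $y_i'$ from the 3-ball property with the additional constraint $|u_{i,j}^*(y_i')|<\eta'$---is not a standard consequence of the M-ideal hypothesis, and you give no argument for it. The natural strengthening of the 3-ball property available here is Werner's result \cite[Proposition~2.3]{Dirk6}, which for each $x\in B_X$ produces a net $(z_\alpha)\subset Y$ with $z_\alpha\to x$ in the $\sigma(X,Y^*)$-topology and $\limsup\|y+x-z_\alpha\|\le 1$ for all $y\in B_Y$. But this yields $Pu_{i,j}^*(y_i')\approx Pu_{i,j}^*(x_i^0)$, not $\approx 0$; so it does not deliver the weak smallness you need. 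The difficulty is structural: you are trying to keep the new center weakly close to a pre-chosen point $x_i^0$, while the M-ideal tools only let you control the $Y^*$-part of $x_i^0-y_i'$, not of $y_i'$ itself.

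The paper's proof avoids this by a different architecture. It works with slices $S_i=S(B_X,x_i^*,\alpha_i)$ (a single functional each), decomposes $x_i^*=Px_i^*+(x_i^*-Px_i^*)$ via the M-ideal projection, and uses \emph{both} pieces of the SSD2P output of $Y$: the direction $v$ \emph{and} the centers $u_i$, the latter chosen in slices of $B_Y$ determined by $Px_i^*/\|Px_i^*\|$. Separately one picks $x_i\in B_X$ nearly norming $x_i^*-Px_i^*$, and Werner's result supplies $z_i\in Y$ with $\|u_i+x_i-z_i\|,\ \|u_i\pm v+x_i-z_i\|<1+\varepsilon$ and $|Px_i^*(x_i-z_i)|<\varepsilon$. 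The new centers are $(u_i+x_i-z_i)/(1+\varepsilon)$, and one checks directly that they lie in $S_i$. The essential point you missed is that the SSD2P centers $u_i$ in $Y$---which you discarded---are precisely what make $Px_i^*$ large on the new center; without them there is no mechanism forcing membership in the slice.
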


\begin{proof} The proof is modelled on the proof of \cite[Proposition~3]{HL1}.
  Let $S_i := S(B_{X},x_i^*,\alpha_i)$, $i\in\{1,\dots,n\}$, be slices and let $\varepsilon > 0$.

  Let $P:X^* \to X^*$ with $\ker P = Y^\perp$ be the
  M-ideal projection.
  Define
  \begin{equation*}
    y_i^* := \frac{Px_i^*}{\|Px_i^*\|}
    \quad \mbox{and} \quad
    \beta_i :=
    \frac{\varepsilon(1-\|Px_i^*\|)+\varepsilon^2}{\|Px_i^*\|} > 0.
  \end{equation*}
  Since $Y$ has the SSD2P there exist
  $u_i \in S(B_Y,y_i^*,\beta_i)$ and $v \in B_Y$ with
  $u_i \pm v \in
  S(B_Y,y_i^*,\beta_i)$ and $\|v\| > 1-\varepsilon$.
  Note that we then have $|y_i^*(v)| < \beta_i$.
  The choice of $\beta_i$ means that
  \begin{equation*}
    Px_i^*(u_i) > (\|Px_i^*\| - \varepsilon)(1+\varepsilon).
  \end{equation*}
  If we happen to have $Px_i^* = 0$ we just set $u_i = 0$
  and use the $v$ we get from the rest of the slices.
  And if $Px_i^*=0$ for all $i$ then use any $v \in S_Y$.

  Find $x_1,\ldots,x_n$ such that
  \begin{equation*}
    (x_i^*-Px_i^*)(x_i) > 
    (\|x_i^*-Px_i^*\| - \varepsilon)(1 + \varepsilon).
  \end{equation*}
  By Proposition~2.3 in \cite{Dirk6} for each $i$ there is
  a net $z_{\alpha,i}$ in $Y$ such that $z_{\alpha,i} \to x_i$
  in the $\sigma(X,Y^*)$-topology and
  \begin{equation*}
    \limsup \|y + (x_i-z_{\alpha,i})\| \le 1
  \end{equation*}
  for all $y \in B_Y$.
  Hence we may choose $z_i \in Y$ such that
  \begin{gather*}
    \|u_i + x_i - z_i\| < 1+\varepsilon \\
    \|u_i \pm v + x_i - z_i\| < 1+\varepsilon \\
    |P(x_i^*)(x_i-z_i)| < \varepsilon.
  \end{gather*}
  Define
  \begin{equation*}
    y_i := \frac{u_i + x_i - z_i}{1+\varepsilon}
    \quad \mbox{and} \quad
    w := \frac{v}{1+\varepsilon}.
  \end{equation*}
  Then
  \begin{align*}
    x_i^*(y_i) &
    = \frac{x_i^*(u_i + x_i - z_i)}{1+\varepsilon} \\
    &= \frac{Px_i^*(u_i) + (x_i^*-Px_i^*)(x_i) + Px_i^*(x_i-z_i)}
    {1+\varepsilon} \\
    &> \frac{(\|Px_i^*\|-\varepsilon)(1+\varepsilon) +
      (\|x_i^*-Px_i^*\|-\varepsilon)(1+\varepsilon) - \varepsilon}
    {1+\varepsilon} \\
    &> \|x_i^*\| - 3\varepsilon=1-3\varepsilon. 
  \end{align*}
  Also $1 \ge \|w\| \ge
  \frac{1-\varepsilon}{1+\varepsilon}$
  and
  \begin{align*}
    x_i^*(y_i \pm w)
    &> 1 - 3\varepsilon
    \pm \frac{\|Px_i^*\|}{1+\varepsilon}y_i^*(v) >1-3\varepsilon-\frac{\|P\xast_i\|}{1+\varepsilon}\beta_i\\
    &= 1 - 3\varepsilon
    - \frac{\varepsilon - \varepsilon\|Px_i^*\| + \varepsilon^2}
    {1+\varepsilon}>1-4\varepsilon.
  \end{align*}
  Since $\varepsilon > 0$ is arbitrary
  we can choose it as small as we like so that
  $y_i \in S_i$,
  $y_i \pm w \in S_i$
  and $\|w\|$ is as close to $1$
  as we like.
\end{proof}

The SD2P-version of the following result is {\cite[Theorem 4.10]{ALN}}, its proof in \cite{ALN} actually proves the SSD2P-version.

\begin{theorem}\label{thm: X and X** SSD2P}
Let $X$ be a Banach space and $Y$ its proper closed subspace. If $Y$ is an M-ideal in $X$, that is
  $\Xast = Z \oplus_1 Y^{\perp}$
  for some nonempty subspace $Z$ of $\Xast$, and moreover, if $Z$ is $1$-norming for $X$,
  then both $X$ and $Y$ have the SSD2P.

  In particular, if $X$ is non-reflexive and
  an M-ideal in $X^{**}$, then both $X$
  and $X^{**}$ have the SSD2P.
\end{theorem}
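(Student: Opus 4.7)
Plan: The strategy is to first establish the SSD2P for $Y$; once this is known, Proposition~\ref{prop: M-ideal} delivers the SSD2P for $X$. The ``in particular'' case is a specialization of the general statement with $Y:=X$ and superspace $X^{**}$, since $X^*$ is automatically $1$-norming for $X^{**}$.

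For the SSD2P of $Y$, the key observation is that the $1$-norming hypothesis produces an isometric embedding $X\hookrightarrow Y^{**}$ via $x\mapsto \widehat{x}|_Z$, placing us in the situation $Y\subset X\subset Y^{**}$ with $Y$ an M-ideal in $X$. I would verify Theorem~\ref{ssd2p:equiv_forms}~\ref{item:ssd2p-char-e}. Given $y_1,\ldots,y_n\in S_Y$, relatively weakly open neighborhoods $U_i\in\mathcal{O}(y_i)$ in $B_Y$, $V\in\mathcal{O}(0)$ in $B_Y$, and $\varepsilon>0$, first pick $x_0\in S_X$ with $\mathrm{dist}(x_0,Y)=1$, which is possible since $Y$ is a proper closed subspace. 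Invoking Proposition~2.3 of \cite{Dirk6} (as in the proof of Proposition~\ref{prop: M-ideal}) applied to $x_0$, and noting that its conclusion $\limsup_\alpha\|y+(x_0-z_\alpha)\|_X\le 1$ for all $y\in B_Y$ already encodes both signs by replacing $y$ with $-y$, we obtain a net $(z_\alpha)\subset Y$ with $z_\alpha\to x_0$ in $\sigma(X,Y^*)$ and $\limsup_\alpha\|y\pm(x_0-z_\alpha)\|_X\le 1$ for every $y\in B_Y$. Setting $v_\alpha:=z_\alpha-x_0\in X$, we get $v_\alpha\to 0$ in $\sigma(X,Z)$, $\|v_\alpha\|_X\to 1$ (using the lower bound $\|v_\alpha\|\ge \mathrm{dist}(x_0,Y)=1$), and $\limsup\|y_i\pm v_\alpha\|_X\le 1$ for each $i$. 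Consequently, for $\alpha$ large enough, $v_\alpha$ lies in the preimage of $V$ inside $B_X$ with $\|v_\alpha\|_X\ge 1-\varepsilon$ and $\|y_i\pm v_\alpha\|_X\le 1+\varepsilon$.

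The crucial step is transferring $v_\alpha$ from $X$ into $Y$. Since $v_\alpha\in X\subset Y^{**}$ isometrically, I would apply the Principle of Local Reflexivity to the finite-dimensional subspace $E:=\spann\{v_\alpha,y_1,\ldots,y_n\}\subset Y^{**}$ and a finite set $F\subset Y^*$ containing the defining functionals of all $U_i$ and $V$: this yields an operator $T:E\to Y$ with $Ty_i=y_i$, $(1-\varepsilon)\|e\|\le\|Te\|\le(1+\varepsilon)\|e\|$ on $E$, and $f(Te)=f(e)$ for all $f\in F$. Then $v:=Tv_\alpha\in Y$ satisfies $\|v\|_Y\ge(1-\varepsilon)\|v_\alpha\|_X\ge(1-\varepsilon)^2$, $v$ still lies in (a slightly enlarged version of) $V$ via $f(v)=f(v_\alpha)$ for $f\in F$, and $\|y_i\pm v\|_Y=\|T(y_i\pm v_\alpha)\|_Y\le(1+\varepsilon)\|y_i\pm v_\alpha\|_X\le(1+\varepsilon)^2$. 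Normalizing $v$ to lie on $S_Y$ and taking $u_i:=y_i\in U_i\cap S_Y$ then produces the configuration required by Theorem~\ref{ssd2p:equiv_forms}~\ref{item:ssd2p-char-e}, after absorbing the various factors into a single $\varepsilon'$.

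The main obstacle is precisely this transfer step: the element $v_\alpha$ produced by the M-ideal approximation lies in $X\setminus Y$, and ensuring that the ``rounded'' element $v\in Y$ retains \emph{simultaneously} both the large norm (to the tune of $1-O(\varepsilon)$) and the uniform control $\|y_i\pm v\|\le 1+O(\varepsilon)$ is where the interplay of the $1$-norming hypothesis (through the isometric embedding $X\hookrightarrow Y^{**}$) and local reflexivity is essential. The bookkeeping of the $\varepsilon$'s and the quantitative loss in local reflexivity is routine but must be carried out carefully.
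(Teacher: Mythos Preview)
Your proposal is correct. The paper itself offers no self-contained argument; it merely asserts that the proof of \cite[Theorem~4.10]{ALN}, written for the SD2P, in fact yields the SSD2P configuration. So what you have written is a genuine alternative rather than a reconstruction of the paper's (absent) proof.

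One small repair: it is not in general true that one can pick $x_0\in S_X$ with $\mathrm{dist}(x_0,Y)=1$ exactly; Riesz's lemma only guarantees $\mathrm{dist}(x_0,Y)>1-\eta$ for arbitrary $\eta>0$. This is harmless for your argument, since you only use the lower bound $\|v_\alpha\|\ge \mathrm{dist}(x_0,Y)$ and the resulting $\eta$ is absorbed in the final $\varepsilon$-bookkeeping along with the PLR and Werner errors.

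Your route---prove the SSD2P for $Y$ first by exploiting the isometric embedding $X\hookrightarrow Y^{**}$ (which is precisely where the $1$-norming hypothesis enters), manufacture the common direction $v_\alpha=z_\alpha-x_0$ in $X\subset Y^{**}$ via Werner's basic-inequality net, pull it back into $Y$ by the Principle of Local Reflexivity, and then lift to $X$ through Proposition~\ref{prop: M-ideal}---is clean and makes the role of each hypothesis transparent. The argument in \cite{ALN} works in the opposite order, establishing the property for $X$ directly from the M-ideal structure and then passing it down to $Y$; your inversion costs an extra PLR step but has the advantage of isolating the $1$-norming assumption as exactly the condition making $X\hookrightarrow Y^{**}$ isometric, so that local reflexivity controls norms on both sides.
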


\begin{remark}
  Similar results to Proposition~\ref{prop: M-ideal} and Theorem~\ref{thm: X and X** SSD2P} \emph{cannot} hold for ASQ spaces,
  because $c_0$ is an M-ideal in $\ell_\infty=(c_0)^{\ast\ast}$ and $c_0$ is ASQ, but $\ell_\infty$ is not ASQ.
\end{remark}

\section{Lipschitz spaces with the w$^\ast$-SSD2P}
\label{sec: Lipschitz}

Recall from \cite{PR_CharacterizationOH} that a metric space $M$ has the \emph{long trapezoid property} (LTP) if for every finite subset $N\subset M$ and $\varepsilon>0$, there exist $u,v\in M, u\neq v$, such that
\[
(1-\varepsilon)(d(x,y)+d(u,v))\leq d(x,u)+d(y,v)
\]
holds for all $x,y\in N$. In \cite[Theorem~3.1]{PR_CharacterizationOH} the authors prove that $M$ has the LTP if and only if $\Lip_0(M)$ has the w$^\ast$-SD2P, that is, every finite convex combination
of weak$^\ast$ slices of $B_{\Lip_0(M)}$ has diameter two. We show that for some $M$ with the LTP the space Lip$_0(M)$ even has the weak$^\ast$ version of the SSD2P (see Theorem~\ref{thm: Lip(M) has w*-SSD2P} below).


\begin{definition}
  A dual Banach space $X^\ast$ has the
  \emph{weak$^\ast$ symmetric strong diameter 2 property}
  (w$^\ast$-SSD2P) if for every finite
  family $\{S_i\}^n_{i=1}$ of weak$^\ast$ slices of $B_{X^\ast}$ and
  $\varepsilon>0$ there exist $x^\ast_i\in S_i$ and
  $\yast \in B_{X^\ast}$, independent of $i$,
  such that $x^\ast_i \pm \yast  \in S_i$ for every
  $i \in \setN$ and
  $\n{\yast }>1-\varepsilon$.
\end{definition}

In a dual space the (S)SD2P clearly implies the w$^\ast$-(S)SD2P. The space $(C[0,1])^\ast$ has the w$^\ast$-SD2P and fails the SD2P (see \cite[Example~1.1]{haller_duality_2015}).  We do not know whether the w$^\ast$-SSD2P and the SSD2P for a dual space are really different. However, the w$^\ast$-SSD2P is stronger than the w$^\ast$-SD2P. Indeed, $\ell_\infty\oplus_1 \ell_\infty$ has the SD2P (see \cite{ALN}, hence also the w$^\ast$-SD2P), but $\ell_1$-sums never have the w$^\ast$-SSD2P (the proof is similar to the one of Theorem~\ref{thm: direct sums with SSD2P}). We also note that a Banach space $X$ has the SSD2P if and only if $\Xastast$ has the w$^\ast$-SSD2P, because by Goldstine's theorem $B_X$ is w$^\ast$-dense in $B_{\Xastast}$ and the norm on $\Xastast$ is w$^\ast$-lower semicontinuous.


Let $M$ be a pointed metric space with metric
$d$ and a base point denoted by $0$.
The space $\Lip_0(M)$ of all Lipschitz functions $f\colon M\rightarrow \R$ with $f(0) = 0$ is a Banach space with norm
\[
 \|f\|=\sup\left\{\frac{|f(x) - f(y)|}{d(x,y)}\colon x,y\in M, x\neq y\right\}.
\]
It is known that $\Lip_0(M)$ is a dual space, whose canonical predual is Lipschitz-free space $\mathcal{F}(M)$, the norm closed linear subspace of $\Lip_0(M)^\ast$ spanned by the evaluation functionals $\delta_x$ with $x\in M$. If $\mu=\sum_{i=1}^n a_i\delta_{x_i}$ is an element in $\mathcal{F}(M)$ with $x_i\in M\setminus\{0\}$ and $a_i\neq 0$ for every $i\in\{1,\dots,n\}$, then we will denote the \emph{support} of $\mu$ by $\supp(\mu):=\{x_1,\dots,x_n\}$. 



\begin{proposition}\label{prop: M unbounded}
  If $M$ is an unbounded metric space,
  then $\Lip_0(M)$ has the w$^\ast$-SSD2P.
\end{proposition}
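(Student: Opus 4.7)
The plan is to exploit unboundedness of $M$ by placing a single Lipschitz "bump" $g$ in a region far from the finite data defining the given weak$^\ast$ slices, and to choose each $f_i$ so that it is identically constant in that far region. Then $g$ will be invisible to the defining functionals, while $\|f_i \pm g\|_{\Lip}$ stays $\le 1$ because $f_i$ and $g$ have disjoint "active regions."

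First I reduce to the case where each defining functional $\mu_i \in S_{\mathcal{F}(M)}$ of the slice $S_i = S(B_{\Lip_0(M)},\mu_i,\alpha_i)$ is finitely supported: finite linear combinations of evaluation functionals are norm-dense in $\mathcal{F}(M)$, so by tightening each $\alpha_i$ (say, to $\alpha_i/3$) the small approximation error can be absorbed in the end. Put $N := \{0\} \cup \bigcup_i \supp(\mu_i)$ and $r := \max_{y \in N} d(y,0)$. For each $i$, fix some $\hat f_i \in S_i$.

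Next, using unboundedness of $M$, choose $z \in M$ with $d(z,0) > 5r/\varepsilon$, and set $D := d(z,N) \geq d(z,0)-r$ and $A := D - 4r$. Define
\[
g(x) := \max\{0,\, A - d(x,z)\},
\]
which is $1$-Lipschitz, satisfies $g(0) = 0$ and $g \equiv 0$ on $N$ (so $\mu_i(g) = 0$ for every $i$), and has
\[
\|g\|_{\Lip} \;\geq\; \frac{|g(z)-g(0)|}{d(z,0)} \;=\; \frac{A}{d(z,0)} \;\geq\; 1 - \frac{5r}{d(z,0)} \;>\; 1-\varepsilon.
\]
For each $i$, let
\[
f_i(x) := \max\!\Big\{-r,\; \min\!\big\{r,\; \max_{y \in N}[\hat f_i(y) - d(x,y)]\big\}\Big\},
\]
the McShane max extension of $\hat f_i|_N$ truncated to $[-r,r]$. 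Then $f_i$ is $1$-Lipschitz with $f_i(0)=0$, coincides with $\hat f_i$ on $N$ (so $\mu_i(f_i) = \mu_i(\hat f_i)$), and equals $-r$ on $\{x : d(x,N) \geq 2r\}$, because for such $x$ the inner max is at most $r - 2r = -r$. Since the gap is $D - A = 4r$, this constancy holds in particular on $B(z,A)$.

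The crux of the proof is the verification that $\|f_i \pm g\|_{\Lip} \leq 1$, i.e.\ that $|f_i(x)-f_i(y)| + |g(x)-g(y)| \leq d(x,y)$ for all $x,y \in M$. The routine cases are those in which $x$ and $y$ lie in the same regime (both in $B(z,A)$, where $f_i$ is constant; both in $B(N,2r)$, where $g$ vanishes; or both outside, where both $f_i$ and $g$ are constant). The main obstacle will be the mixed case $x \in B(z,A)$, $y \in B(N,2r)$, where both $f_i$ and $g$ contribute nontrivial variation. There the bounds $|f_i(x)-f_i(y)| \leq 2r$ and $|g(x)-g(y)| = A - d(x,z)$ combine with the geometric lower bound $d(x,y) \geq D - 2r - d(x,z)$ (from $d(y,z) \geq D - 2r$) and the choice $A = D - 4r$ to give exactly the required inequality. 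Together with $\mu_i(f_i \pm g) = \mu_i(f_i) > 1 - \alpha_i$ (after absorbing the approximation error from the first step) and $\|g\|_{\Lip} > 1-\varepsilon$, this verifies the w$^\ast$-SSD2P.
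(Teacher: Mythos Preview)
Your proof is correct. Both your argument and the paper's exploit unboundedness in the same way: place a Lipschitz bump far from the finite set $N$ supporting the $\mu_i$, and replace each norming function by one that is constant on the bump's region. The executions differ. The paper defines $\tilde f_i$ to vanish on $M\setminus B(0,2r)$ and $\tilde\varphi$ to vanish on a much larger ball $B(0,t)$ (with a single spike at a far point $v$), extends both norm-preservingly, and then scales everything by a factor $(1-\delta)$; this scaling, together with an auxiliary intermediate point $u$ and the ``almost additive'' inequality $d(x,y)\ge(1-\delta)(d(x,u)+d(u,y))$, is what controls the mixed case. Your construction is fully explicit: the truncation $|f_i|\le r$ gives the global bound $|f_i(x)-f_i(y)|\le 2r$, and the cone $g(x)=(A-d(x,z))_+$ with $A=D-4r$ makes the mixed-case estimate $2r+(A-d(x,z))\le (D-2r)-d(x,z)\le d(x,y)$ hold on the nose, so no scaling factor or intermediate point is needed. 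The payoff of your route is a cleaner argument with fewer moving parts; the paper's version, in exchange, keeps the extension step as a black box and is a bit more modular.
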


\begin{proof}
  Let $n\in\N$,
  $S_1 = S(B_{\Lip_0(M)},\mu_1,\alpha_1),\dots, S_n = S(B_{\Lip_0(M)},\mu_n,\alpha_n)$
  be weak$^\ast$ slices of $B_{\Lip_0(M)}$, where $\mu_i\in \spann\{\delta_x\colon x\in M\}$, and $\varepsilon>0$.
  We want to show that there exist $f_i\in S_i$
  and $\varphi\in B_{\Lip_0(M)}$ such that
  \begin{equation*}
    f_i \pm \varphi \in S_i
    \qquad \text{ and } \qquad \|\varphi\| > 1-\varepsilon.
  \end{equation*}
  Choose $g_i \in S_i$ with $g_i(\mu_i) = 1$
  for $i \in \setN$. Denote
  by $N := \{0\}\cup \bigcup_{i=1}^n \supp(\mu_i)$. The main idea of the proof is to find norm preserving extensions $f_i$ of $g_i|_N$ such that $f_i|_{M\setminus B(0,s)}=0$ and $\varphi|_{B(0,t)}=0$ for suitable $0<s<t$.
  Since $N$ is a finite subset of $M$, there is an $r > 0$ such that
  $N \subset B(0,r)$.
  Let $s := 2r$.
  Then for every $x\in B(0,r)$ and
  $y \in M\setminus B(0,s)$
  we have $d(x,0) \leq d(x,y)$.
  Since $M$ is unbounded there exists $u \in M \setminus B(0,s)$.

  Let $\delta > 0$ be such that
  $(1-\delta)^2 > \max\{1-\varepsilon, 1-\alpha_i\}$.
  Find a $t > 0$ such that for every $x\in B(0,s)$ and
  $y \in M \setminus B(0,t)$ one has
  \begin{equation*}
    d(x,y)\geq (1-\delta)(d(x,u)+d(u,y)).
  \end{equation*}
  For example, any $t$ with $\delta t \geq 2(s + d(0,u))$ does the job.

  Since $M$ is unbounded there exists $v \in M \setminus B(0,t)$ such that
  \begin{equation*}
    \frac{d(v,0) - t}{d(v,0)} > 1 - \delta, \text{ that is, } \delta\cdot d(v,0)>t.
  \end{equation*}
  Define $\tilde{\varphi} \colon B(0,t) \cup \{v\} \to \R$
  by $\tilde{\varphi}|_{B(0,t)} = 0$ and $\tilde{\varphi}(v) = d(v,0) - t$.
  Then $\|\tilde{\varphi}\| \leq 1$,
  because for any $x\in B(0,t)$ we have
  \begin{equation*}
  \frac{|\tilde{\varphi}(v) - \tilde{\varphi}(x)|}{d(v,x)}
  =
    \frac{|\tilde{\varphi}(v) - 0|}{d(v,x)}
    \leq
    \frac{d(v,0) - t}{d(v,0) - d(0,x)}
    \leq 1.
  \end{equation*}
Also $\|\tilde{\varphi}\| > 1 - \delta$, because
  \begin{equation*}
    \|\tilde{\varphi}\| \geq
    \frac{\tilde{\varphi}(v) - \tilde{\varphi}(0)}{d(v,0)}
    = \frac{d(v,0) - t}{d(v,0)}
    > 1 - \delta.
  \end{equation*}
 For every $i\in\{1,\dots,n\}$ define $\tilde{f_i} \colon N \cup (M \setminus B(0,s)) \to \R$ by
  $\tilde{f_i}|_N = g_i$ and $f_i|_{M \setminus B(0,s)} = 0$.
  Then $\|\tilde{f_i}\| \leq 1$,
  because for any $x \in N$ and $y \in M\setminus B(0,s)$
  we have
  \begin{equation*}
  |\tilde{f_i}(x) - \tilde{f_i}(y)|=|\tilde{f_i}(x)-\tilde{f_i}(0)|\leq d(x,0)\leq d(x,y).
  \end{equation*}
  Consider $f_i :=(1 - \delta)\tilde{f_i}$ and $\varphi :=(1 - \delta)\tilde{\varphi}$
  and extend them norm preservingly to $M$.  Observe that $\|f_i \pm \varphi\| \leq 1$,
  because for any $x \in B(0,s)$ and $y \in M\setminus B(0,t)$
  we have
    \begin{align*}
    |(f_i\pm \varphi)(x)&-(f_i\pm \varphi)(y)|=|f_i(x) \pm \varphi(y)|\\
    &\leq 
    |f_i(x)| +|\varphi(y)|   =    |f_i(x)-f_i(u)|+|\varphi(y)-\varphi(u)|\\
    &\leq 
    (1-\delta)d(x,u) +(1-\delta)d(u,y)\leq d(x,y).
  \end{align*}
  
Finally, note that $\|\varphi\|=(1-\delta)\|\tilde{\varphi}\|>(1-\delta)^2>1-\varepsilon$, and 
\begin{align*}
(f_i\pm \varphi)(\mu_i)&=f_i(\mu_i)=(1-\delta)\tilde{f_i}(\mu_i)\\
&=(1-\delta)g_i(\mu_i)=1-\delta\\
&>(1-\delta)^2>1-\alpha_i.
\end{align*}
\end{proof}

\begin{proposition}\label{prop: Lip and discrete}
  If $M$ is an infinite discrete metric space, then
  $\Lip_0(M)$ has the w$^\ast$-SSD2P.
\end{proposition}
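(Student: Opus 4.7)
My plan is to reduce to the case of bounded $M$, since the unbounded case is already settled by Proposition~\ref{prop: M unbounded}, and then imitate the argument of that proof, replacing ``distance to the basepoint'' by ``isolation of a point in $M$''.

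Fix weak$^\ast$ slices $S_i = S(B_{\Lip_0(M)}, \mu_i, \alpha_i)$ with $\mu_i \in \spann\{\delta_x : x \in M\}$ and fix $\varepsilon > 0$. Choose $g_i \in S_i$ with $g_i(\mu_i) = 1$ and let $N := \{0\} \cup \bigcup_{i=1}^n \supp(\mu_i)$, a finite subset of $M$. Since $M$ is infinite and discrete, $M \setminus N$ consists of infinitely many isolated points; for each such $v$, writing $\rho_v := d(v, M \setminus \{v\}) > 0$, the peak function $\psi_v$ given by $\psi_v(v) = \rho_v$ and $\psi_v(y) = 0$ for $y \neq v$ lies in $S_{\Lip_0(M)}$. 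For a small $\delta > 0$ I would then select $v \in M \setminus N$ satisfying the ``trapezoid'' inequality
\[
d(v, z_1) + d(v, z_2) - d(z_1, z_2) \geq \tfrac{2\rho_v}{1-\delta}
\qquad \text{for all } z_1, z_2 \in N,
\]
and set $\varphi := (1-\delta)\psi_v$. This inequality is precisely what is required so that, for each $i$, the intervals $[g_i(z) - (d(v,z) - \rho_v)/(1-\delta),\, g_i(z) + (d(v,z) - \rho_v)/(1-\delta)]$ over $z \in N$ share a common point $c_i$. Defining $f_i$ by $f_i|_N := g_i|_N$, $f_i(v) := c_i$, and McShane-extending to the remainder of $M$ produces $f_i \in B_{\Lip_0(M)}$ with $f_i(\mu_i) = g_i(\mu_i)$. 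A direct check of the Lipschitz ratio at pairs containing $v$ then gives $\|f_i \pm \varphi\| \leq 1$; since $\varphi$ vanishes on $N$, $(f_i \pm \varphi)(\mu_i) = 1 > 1 - \alpha_i$, so $f_i \pm \varphi \in S_i$, while $\|\varphi\| = 1-\delta > 1 - \varepsilon$ once $\delta$ is chosen small enough.

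The main obstacle is guaranteeing that such a point $v$ can be found in every bounded infinite discrete $M$. When $M$ is not uniformly discrete, one may pick a sequence $(v_k) \subset M \setminus N$ with $\rho_{v_k} \to 0$ while $d(v_k, N)$ stays bounded below (available because $N$ is finite and $M$ contains close pairs), and the displayed inequality is then satisfied for all sufficiently large $k$. The delicate case is uniformly discrete bounded $M$---for instance the unit vectors of $\ell_1$ in their induced metric---where $\rho_v$ is bounded below by the uniform discreteness constant. In that case I would replace the single peak by an antisymmetric double peak supported on some pair $u, v \in M \setminus N$ with $d(u,v)$ as large as possible, taking $\varphi(u) = d(u,v)/2$, $\varphi(v) = -d(u,v)/2$, and $\varphi \equiv 0$ elsewhere, and impose $f_i(u) = f_i(v)$ in the extension of $g_i|_N$; the feasibility then reduces to an analogous ``double-excess'' inequality, which becomes easy once the diameter of $M$ is large enough relative to the discreteness constant.
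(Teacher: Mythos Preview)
There is a significant interpretive issue: in this paper ``discrete metric space'' means the $0$--$1$ metric (i.e., $d(x,y)=1$ whenever $x\neq y$), not merely a topologically discrete space. This is forced by the proof itself, which uses $d(x_i,y_i)=1$ and takes $\varphi(u)=\tfrac12$, $\varphi(v)=-\tfrac12$ for \emph{arbitrary} $u,v\in M\setminus N$, and it is confirmed by the structure of Theorem~\ref{thm: Lip(M) has w*-SSD2P}: the bounded uniformly discrete space $K_2$ is listed separately as case~(d), and the paper explicitly states that it does not know whether $\Lip_0(K_n)$ has the w$^\ast$-SSD2P for $n\geq 3$.

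With the $0$--$1$ metric the paper's argument is essentially a one-liner: choose any $u,v\in M\setminus N$, set $\varphi(u)=\tfrac12$, $\varphi(v)=-\tfrac12$, $\varphi=0$ elsewhere, and set $f_i=g_i$ on $N$, $f_i(u)=f_i(v)=\tfrac12\bigl(\max_{N}g_i+\min_{N}g_i\bigr)$, $f_i=0$ elsewhere. No case split, no trapezoid inequality, no $\delta$.

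Your proposal instead attacks the general topologically discrete case and ends with a genuine gap. In the bounded uniformly discrete situation you assert that the double-peak construction ``becomes easy once the diameter of $M$ is large enough relative to the discreteness constant,'' but you never quantify this, and the ratio can be anything: it equals $1$ for the $0$--$1$ metric and $2$ for $K_2$. Already for $K_2$ the paper needs the separate, rather delicate Proposition~\ref{prop: Lip(K_2) has w*-SSD2P}, in which the extensions $f_i$ must be prescribed carefully at \emph{every} point of $K_2$, not just on $N\cup\{u,v\}$; for $K_n$ with $n\geq 3$ the question is left open. So your final sentence is hiding precisely the difficulty that the paper cannot resolve in general, and the claimed reduction does not go through.
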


\begin{proof}
  Let $n\in\N$,
  $S_1 = S(B_{\Lip_0(M)},\mu_1,\alpha_1),\dots, S_n = S(B_{\Lip_0(M)},\mu_n,\alpha_n)$
  be weak$^\ast$ slices of $B_{\Lip_0(M)}$, where $\mu_i\in\spanof\{\delta_x\colon x\in M\}$.
  
  Let $N=\{0\}\cup\bigcup_{i=1}^n\supp(\mu_i)$. For every $i\in\{1,\dotsc,n\}$ choose $g_i \in S_{Lip_0(N)}$ such that $g_i(\mu_i)=1$ and let $x_i,y_i\in N$ be such that \[g_i(x_i)-g_i(y_i)=d(x_i,y_i)=1.\]
  Fix any two different elements $u,v\in M\setminus N$. Define $f_i\in S_i$ and $\varphi\in S_{Lip_0(M)}$ by setting
  \[
  f_i(x):=\begin{cases}g_i(x),& \text{if $x\in N$},\\\dfrac{g_i(x_i)+g_i(y_i)}{2},& \text{if $x=u$ or $x=v$},\\ 0& \text{elsewhere},
  \end{cases}
  \]
  and
  \[
  \varphi(x):=\begin{cases}\dfrac{g_i(x_i)-g_i(y_i)}{2}=\dfrac12,& \text{if $x=u$},\\\dfrac{g_i(y_i)-g_i(x_i)}{2}=-\dfrac12,& \text{if $x=v$},\\ 0& \text{elsewhere}.
  \end{cases}
  \]
  Then $f_i, f_i\pm\varphi\in S_i$, and $\|\varphi\|=1$.

\end{proof}

For $n\in \N$ denote by
\[
K_n:=\{x \in \ell_\infty \colon x(k)\in\{0,1,\dots,n\}\}
\]
with metric inherited from $\ell_\infty$. Note that $\Lip_0(K_n)$ has the w$^\ast$-SD2P, because $K_n$ has the LTP (see Lemma~\ref{lemma: K_2}). Also, observe that Proposition~\ref{prop: Lip and discrete} shows that $\Lip_0(K_1)$ has the w$^\ast$-SSD2P. In Proposition~\ref{prop: Lip(K_2) has w*-SSD2P} below, we prove that $\Lip_0(K_2)$ has the w$^\ast$-SSD2P. However, we do not know whether $\Lip_0(K_n)$ has the w$^\ast$-SSD2P for every $n\in\N$. It is also unknown, whether every slice of the unit ball of $\Lip_0(K_n)$ has diameter two for every $n\in \N$ (see \cite[p.~114]{Ivakhno}). 

\begin{lemma}\label{lemma: K_2}
Let $n\in\N$ and let $N$ be a finite subset of $K_n$. Then there are $u,v\in K_n\setminus N$ satisfying
\begin{enumerate}
    \item[(a)] $d(u,v)=1$;
    \item[(b)] For all $x\in N$ one has $d(x,u)=d(x,v)$;
    \item[(c)] For all $x,y\in N$ one has $d(x,y)\leq d(x,u)$ or $d(x,y)\leq d(y,u)$.
\end{enumerate}
In particular, $K_n$ has the LTP.
\end{lemma}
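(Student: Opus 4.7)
The strategy is to reduce matters to a tractable situation by a pigeonhole argument on coordinates, and then construct $u$ and $v$ that differ in exactly one coordinate of the resulting good index set.

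First, write $N=\{x_1,\dots,x_m\}$. Since the map $k \mapsto (x_1(k),\dots,x_m(k))$ takes values in the finite set $\{0,1,\dots,n\}^m$, there is a tuple $(\gamma_1,\dots,\gamma_m)$ attained on an infinite set $I \subset \N$, so that $x_i(k) = \gamma_i$ for every $k \in I$ and every $i$.

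Next, fix two distinct $k_0, k_1 \in I$ and define $u \in K_n$ by $u(k_0) = n$ and $u(k) = 0$ otherwise, and $v \in K_n$ by $v = u$ except $v(k_1) = 1$. Then $d(u,v) = 1$, proving (a). Both elements lie outside $N$: indeed, $u = x_i$ would force $\gamma_i = u(k_0) = n$ and $\gamma_i = u(k) = 0$ for any $k \in I \setminus \{k_0, k_1\}$ (which exists since $I$ is infinite), contradicting $n \geq 1$; the analogous argument works for $v$. For (b), observe that $d(x_i, u) \geq \max(\gamma_i, n - \gamma_i)$ because coordinate $k_0$ contributes $n - \gamma_i$ and any $k \in I \setminus \{k_0, k_1\}$ contributes $\gamma_i$; since $u$ and $v$ agree off $k_1$, the only possible change to the supremum comes from the $k_1$-terms $\gamma_i$ and $|\gamma_i - 1|$, both of which are dominated by $\max(\gamma_i, n-\gamma_i)$, so $d(x_i,u) = d(x_i,v)$.

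For (c), I would case-split on where $d(x,y)$ is attained. If on $I$, then $d(x,y) = |\gamma_x - \gamma_y|$, and after relabeling so that $\gamma_x \leq \gamma_y$ one gets $d(x,u) \geq n - \gamma_x \geq \gamma_y - \gamma_x = d(x,y)$. If the supremum is attained at some $k^* \notin I$, relabel so that $y(k^*) \geq x(k^*)$; since $k^* \notin \{k_0,k_1\}$ we have $u(k^*) = 0$, hence $d(y,u) \geq y(k^*) \geq y(k^*) - x(k^*) = d(x,y)$. In either case at least one of the required inequalities holds. Finally, the LTP is immediate from (a)--(c): since $d(y,u) \geq \max(\gamma_y, n-\gamma_y) \geq 1$, combining with the fact that (after relabeling) $d(x,y) \leq d(x,u)$ yields $d(x,u) + d(y,v) = d(x,u) + d(y,u) \geq d(x,y) + 1 = d(x,y) + d(u,v)$, a bound even stronger than what LTP demands. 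The main subtle point is the case analysis in (c): the convenient labeling of the pair $\{x,y\}$ may differ pair by pair, which is precisely why (c) is phrased as a disjunction rather than a single uniform bound.
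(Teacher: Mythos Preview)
Your argument is correct. Both you and the paper build $u$ with coordinates in $\{0,n\}$ and obtain $v$ by altering a single ``harmless'' coordinate by $1$; condition (c) then follows from the extremality of $u$, and (b) from the fact that the perturbed coordinate cannot realise any of the distances $d(x_i,u)$. The difference lies in how the harmless coordinate is located. The paper takes an arbitrary $\{0,n\}$-valued $u\notin N$, notes that each distance $d(x,u)$ is attained on some finite index set $I$, and modifies $u$ at any $i_0\notin I$; you instead run a pigeonhole argument to find an infinite set $I$ on which every $x_i$ is constant, set $u=n\cdot e_{k_0}$ for some $k_0\in I$, and perturb at another $k_1\in I$. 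Your route gives a completely explicit $u$ and makes the verification of (b) and (c) a concrete case analysis, while the paper's argument shows that any $\{0,n\}$-valued $u$ works for (c) and is somewhat shorter on that point. One small remark: your inequality $\max(\gamma_i,n-\gamma_i)\ge 1$ used in the LTP step relies on the existence of a third index in $I\setminus\{k_0,k_1\}$ (to witness the $\gamma_i$ contribution for $v$), which you do have since $I$ is infinite; it may be worth making that explicit.
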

\begin{proof}
Choose an $u\in K_n\setminus N$ such that $u(i)\in \{0,n\}$ for every $i\in\N$. For such an element $u$ the condition (c) holds.  We will construct a suitable $v\in K_n\setminus N$, such that it differs from $u$ in only one coordinate $i_0$. Let $I\subset \N$ be a finite subset such that for every $x\in N$ there is an $i\in I$ such that $d(x,u)=|x(i)-u(i)|$. Fix an $i_0\in \N\setminus I$. Let $v(i_0)$ be such that $|u(i_0)-v(i_0)|=1$. Hence condition (a) holds and we will check condition (b). Let $x\in N$. Clearly, $d(x,v)\geq d(x,u)$, because $u(i)=v(i)$ for all $i\in I$. For the reverse inequality, observe that
\[
  |x(i_0)-v(i_0)|=\begin{cases}1,& \text{if $x(i_0)=u(i_0)$},\\|x(i_0)-u(i_0)|-1,& \text{if $x(i_0)\neq u(i_0)$}.
  \end{cases}
  \]
 Hence, $d(x,u)\geq d(x,v)$, and condition (b) holds.
\end{proof}

\begin{proposition}\label{prop: Lip(K_2) has w*-SSD2P}
The Banach space $Lip_0(K_2)$ has the w$^\ast$-SSD2P.
\end{proposition}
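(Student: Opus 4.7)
My approach is to use Lemma~\ref{lemma: K_2} to obtain companion points $u, v \in K_2 \setminus N$, to take $\varphi$ supported on $\{u, v\}$, and to produce each $f_i$ by a McShane extension followed by clipping against carefully chosen $1$-Lipschitz envelopes. This mimics in spirit the construction in Proposition~\ref{prop: Lip and discrete}, but because $K_2$ has diameter $2$ one cannot naively set $f_i = 0$ off a finite set, so a more delicate extension is needed.

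Given weak$^\ast$-slices $S_i = S(B_{\Lip_0(K_2)}, \mu_i, \alpha_i)$ with $\mu_i \in \spann\{\delta_x : x \in K_2\}$ and $\varepsilon > 0$, set $N = \{0\} \cup \bigcup_i \supp(\mu_i)$ and pick $g_i \in S_{\Lip_0(N)}$ with $\mu_i(g_i) > 1 - \alpha_i/2$. Applying Lemma~\ref{lemma: K_2} yields $u, v \in K_2 \setminus N$ with properties (a)--(c); moreover the construction in the lemma's proof arranges $u(j) \in \{0, 2\}$ for every $j$, forcing $d(x, u) = d(x, v) \in \{1, 2\}$ for every $x \in N$. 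Define $\varphi \colon K_2 \to \R$ by $\varphi(u) = 1/2$, $\varphi(v) = -1/2$, and $\varphi(x) = 0$ otherwise; using $d(u, v) = 1$ together with $d(u, x), d(v, x) \geq 1$ for $x \notin \{u, v\}$, one checks $\|\varphi\| = 1$.

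The core step is the construction of $f_i$. For each $i$, pick $c_i \in \R$ satisfying $|g_i(x) - c_i| \leq d(x, u) - 1/2$ for every $x \in N$; by Helly's theorem for real intervals this reduces to the pairwise inequality $|g_i(x) - g_i(y)| \leq d(x, u) + d(y, u) - 1$ for all $x, y \in N$, which follows from $|g_i(x) - g_i(y)| \leq d(x, y)$, Lemma~\ref{lemma: K_2}(c) (say $d(x, y) \leq d(x, u)$), and $d(y, u) \geq 1$ (holding since $y \neq u$ in $K_2$). Setting $\tilde{f_i}|_N = g_i$ and $\tilde{f_i}(u) = \tilde{f_i}(v) = c_i$ gives a $1$-Lipschitz function on $N \cup \{u, v\}$; McShane-extend to a $1$-Lipschitz $F_i \colon K_2 \to \R$ and define
\[
f_i(x) := \max\bigl\{c_i - \tilde{\rho}(x),\, \min\{c_i + \tilde{\rho}(x),\, F_i(x)\}\bigr\},
\]
where $\tilde{\rho}(x) := \max\{0, \min(d(x, u), d(x, v)) - 1/2\}$ is $1$-Lipschitz. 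As a min/max of $1$-Lipschitz functions, $f_i$ is $1$-Lipschitz; the choice of $c_i$ makes the clip inactive on $N$, so $f_i|_N = g_i$; at $u, v$ we have $\tilde{\rho} = 0$, forcing $f_i(u) = f_i(v) = c_i$; and $|f_i(x) - c_i| \leq \tilde{\rho}(x)$ globally by construction.

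The verification of $\|f_i \pm \varphi\| \leq 1$ is then short: for $x \notin \{u, v\}$ one has $|(f_i \pm \varphi)(x) - (f_i \pm \varphi)(u)| = |f_i(x) - c_i \mp 1/2| \leq \tilde{\rho}(x) + 1/2 = \min(d(x, u), d(x, v)) \leq d(x, u)$, symmetrically for $v$, and the pair $(u, v)$ gives $|(c_i + 1/2) - (c_i - 1/2)| = 1 = d(u, v)$. Since $\mu_i$ is supported on $N$ and $\varphi|_N = 0$, we have $\mu_i(f_i \pm \varphi) = \mu_i(g_i) > 1 - \alpha_i/2 > 1 - \alpha_i$, so $f_i, f_i \pm \varphi \in S_i$, while $\|\varphi\| = 1 > 1 - \varepsilon$. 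The main obstacle is producing a single $\varphi$ valid for every slice while maintaining $\|f_i \pm \varphi\| \leq 1$; the clip-against-$\tilde{\rho}$ step accomplishes this by exploiting the fact that distinct points of $K_2$ are at distance at least $1$, which yields the key inequality $\tilde{\rho}(x) + 1/2 \leq d(x, u)$.
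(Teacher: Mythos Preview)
Your proof is correct, and it differs from the paper's in a way worth noting. Both arguments use the same $\varphi$ supported on the companion points $u,v$ from Lemma~\ref{lemma: K_2} and both extend $g_i|_N$ to a $1$-Lipschitz $f_i$ on $K_2$ with $f_i(u)=f_i(v)$. The paper carries this out by first McShane-extending to $N\cup\{u,v\}$, showing the extremal values at $u,v$ differ by exactly~$1$, setting the common value to the midpoint $g_i^+(u)-\tfrac12$, and then defining $f_i$ piecewise according to whether $x$ lies at distance~$1$ from $N_2:=\{x\in N: d(x,u)=2\}$; the verification of $\|f_i\pm\varphi\|\le 1$ is a case analysis. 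Your construction replaces all of this by choosing the common value $c_i$ abstractly via a one-dimensional Helly argument, McShane-extending to the whole space, and then clipping against the $1$-Lipschitz envelope $c_i\pm\tilde\rho$. The single inequality $|f_i(x)-c_i|\le\tilde\rho(x)$ then does the work that the paper's cases do.

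Beyond being shorter, your argument is strictly more general: nowhere do you use that distances to $u$ take only the values $1$ and $2$; you use only Lemma~\ref{lemma: K_2} (stated for all $K_n$) and the fact that distinct points of $K_n$ are at distance at least~$1$. Consequently your proof establishes that $\Lip_0(K_n)$ has the w$^\ast$-SSD2P for every $n\in\N$, which the paper explicitly leaves open after Proposition~\ref{prop: Lip and discrete}. The remark that $d(x,u)\in\{1,2\}$ is true for $K_2$ but unnecessary; you might drop it to make the greater generality visible.
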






\begin{proof}
Consider $n\in\mathbb N$ and weak$^\ast$ slices $S_1=S(B_{\Lip_0(K_2)},\mu_1,\alpha_1),\dotsc,S_n=S(B_{\Lip_0(K_2)},\mu_n,\alpha_n)$ of $B_{Lip_0(K_2)}$, where $\mu_i\in \operatorname{span}\{\delta_x\colon x\in M\}$. We show that there exist $f_i\in S_i$
  and $\varphi\in B_{\Lip_0(K_2)}$ such that
  \begin{equation*}
    f_i \pm \varphi \in S_i
    \qquad \text{ and } \qquad \|\varphi\|=1.
  \end{equation*}

Let $N=\{0\}\cup\bigcup_{i=1}^n\operatorname{supp}(\mu_i)$ and let $u,v\in K_2$ be as in Lemma~\ref{lemma: K_2} for $N$. For every $\mu_i$ choose $g_i\in S_{Lip_0(K_2)}$ such that $g_i(\mu_i)=1$. The main idea of the proof is to define $\varphi$ such that $\varphi=0$ outside $\{u,v\}$ and $f_i$ are norm preserving extensions of $g_i|_N$ satisfying $f_i(u)=f_i(v)$ and $|f_i(x)-f_i(y)|\leq 1$ for every $x,y\notin N$.
For $j\in \{1,2\}$ set
\[N_j:=\{x\in N\colon d(x,u)=d(x,v)=j\}.\]
For every $g_i$ define its norm preserving extension $g^{+}_i$ from $N$ to $N\cup\{u,v\}$ by taking
\begin{align*}
 g^{+}_i(u)&:=\min\{g_i(x) +d(x,u)\colon x\in N\},\\
 g^{+}_i(v)&:=\max\{g^{+}_i(x) -d(x,v)\colon x\in N\cup\{u\}\}.
\end{align*}
This means that $g^{+}_i$ is the maximal extension from $N$ to $u$ and then minimal extension to $v$ preserving the Lipschitz constant (see \cite{MS} or \cite[p.~18]{W}).
Note that $g^{+}_i(v)+1=g^{+}_i(u)$. Indeed, for every $k,l\in\{1,2\}$,
\[
(\min_{x\in N_k}g_i(x)+k)-(\max_{x\in N_l}g_i(x)-l)\geq 1,
\] that is, 
\[
\max_{x\in N_l}g_i(x)-\min_{x\in N_k}g_i(x)\leq k+l-1,
\]
because for $x\in N_l$ and $y\in N_k$
\[
g_i(x)-g_i(y)\leq d(x,y)\leq \begin{cases}
d(x,u)=l\leq k+l-1\\
\text{or}\\
d(y,u)=k\leq k+l-1,
\end{cases}
\]
by Lemma~\ref{lemma: K_2}.

If there is an element $x\in K_2\setminus N$ such that 
\[
N^1_2(x):=\{y\in N_2\colon d(x,y)=1\}\neq\emptyset,
\]
then choose arbitrarily $a^x_i$ from the set
\[
 [\max_{y\in N^1_2(x)}g_i(y)-1,\min_{y\in N^1_2(x)}g_i(y)+1 ]\cap [g^{+}_i(u)-1, g^{+}_i(u)].
\]
Note that the latter intersection is nonempty, because \[
\max_{y\in N^1_2(x)}g_i(y)-1\leq g^{+}_i(u)=g^{+}_i(v)+1
\]
and 
\[
\min_{y\in N^1_2(x)}g_i(y)+1\geq g^{+}_i(u)-1.
\]

Define 
\[
\varphi(x):=
\begin{cases}
\frac12,\quad \text{if } x=u,\\
-\frac12,\quad \text{if } x=v,\\
0\quad \text{elsewhere},
\end{cases}
\]
and
\[
f_i(x):=
\begin{cases}
g_i(x), \quad \text{if } x\in N,\\
a^x_i,\quad \text{if } x\in K_2\setminus N \text{ and } d(x,N_2)=1,\\
g^{+}_i(u)-\frac12\quad \text{elsewhere}.
\end{cases}
\]
Then $f_i\in S_i$, $(f_i\pm \varphi)(\mu_i)=f_i(\mu_i)>1-\alpha_i$, and $\|\varphi\|=1$. To check that $\|f_i \pm \varphi\|\leq 1$, we argue by cases:
\begin{itemize}
    \item If $x\in N$ and $y=u$, then
    \begin{align*}
        |(f_i&\pm \varphi)(x)-(f_i\pm \varphi)(u)|=\left |g_i(x)-\left(g^{+}_i(u)-\frac12\pm \frac12\right) \right|=\\
        &=\begin{cases}
|g_i(x)-g^{+}_i(u)|\leq d(x,u)\\
\text{or}\\
|g_i(x)-g^{+}_i(u)+1|=|g_i(x)-g^{+}_i(v)|\leq d(x,v)=d(x,u).
\end{cases}
    \end{align*}
\item If $x\in K_2\setminus(N\cup\{u,v\})$ and $y=u$, then
    \begin{align*}
        |(f_i&\pm \varphi)(x)-(f_i\pm \varphi)(u)|=\\
        &=\begin{cases}
|a^x_i-(g^{+}_i(u)-\frac12\pm \frac12)|\leq 1\leq d(x,u)\\
\text{or}\\
|(g^{+}_i(u)-\frac12)-(g^{+}_i(u)-\frac12\pm \frac12)|=\frac12 \leq d(x,u),
\end{cases}
    \end{align*}
    because $a^x_i\in [g^{+}_i(u)-1, g^{+}_i(u)]$.
    
\item If $x\in K_2\setminus(N\cup\{u,v\})$ and $y\in N$, then
    \begin{align*}
        |(f_i\pm \varphi)(x)-(f_i\pm \varphi)(y)|=|f_i(x)-f_i(y)|\leq d(x,y),
    \end{align*}
    because $\|f_i\|\leq 1$.
\item The other cases are trivial or similar to the ones above.
\end{itemize}
Hence, $\|f_i\pm \varphi\|\leq 1$, which completes the proof.
\end{proof}

We now collect the known examples of metric spaces $M$ such that $\Lip_0(M)$ has the w$^\ast$-SSD2P.

\begin{theorem}\label{thm: Lip(M) has w*-SSD2P}
  If $M$ is an infinite metric space satisfying at least one of the following conditions:
  \begin{itemize}
      \item[(a)] $\sup\{d(x,y)\colon x,y\in M,x\neq y\}=\infty$;
      \item[(b)] $\inf\{d(x,y)\colon x,y\in M,x\neq y\}=0$;
      \item[(c)] $M$ is a discrete metric space;
      \item[(d)] $M=K_n$, where $n\in\{1,2\}$,
  \end{itemize}
 then $Lip_0(M)$ has the w$^\ast$-SSD2P.
\end{theorem}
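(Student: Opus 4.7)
Cases (a), (c), (d) of the theorem follow directly from earlier results: (a) is Proposition~\ref{prop: M unbounded}, (c) is Proposition~\ref{prop: Lip and discrete}, and for (d) the case $n = 1$ reduces to Proposition~\ref{prop: Lip and discrete} (since $K_1$ is discrete) while the case $n = 2$ is Proposition~\ref{prop: Lip(K_2) has w*-SSD2P}. Only case (b) requires new work, and my plan is to adapt the strategy of Proposition~\ref{prop: M unbounded} from ``unbounded'' to ``accumulating'': instead of a point very far from the basepoint, the support of $\varphi$ sits near an accumulation point, and $f_i$ becomes constant in a neighborhood of this accumulation so that $f_i$ and $\varphi$ can be added without inflating the Lipschitz norm.

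Let $N := \{0\} \cup \bigcup_{i=1}^n \supp(\mu_i)$. Since $\inf\{d(x, y) : x \neq y \in M\} = 0$ and $N$ is finite, a pigeonhole argument produces a Cauchy sequence $(y_k) \subset M \setminus N$ of distinct points; let $p$ be its limit in the completion of $M$, and set $\rho := \min\{d(p, x) : x \in N,\, x \neq p\} > 0$. Given $\varepsilon > 0$ and the $\alpha_i$, I fix $\delta \in (0, 1)$ with $(1 - \delta)^2 > 1 - \varepsilon$ and $1 - \delta > 1 - \alpha_i$ for every $i$, and then pick $v \in M$ close enough to $p$ that $\eta_0 := d(v, p) < \delta \rho / 10$, setting $s := 2 \eta_0$. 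For each $i$, let $g_i \in S_i$ with $g_i(\mu_i) = 1$, as in Proposition~\ref{prop: M unbounded}.

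The two functions are: the $1$-Lipschitz hat $\tilde \varphi(x) := \max\{0, \eta_0 - d(x, v)\}$ supported in $\{x : d(x, v) < \eta_0\}$, for which $\tilde \varphi$ vanishes on $N$ (since $d(x, v) \geq \rho - \eta_0 > \eta_0$ for $x \in N$ different from $p$, and $\tilde \varphi(p) = 0$ via $\eta_0 = d(v, p)$ when $p \in N$) and $\|\tilde \varphi\| = 1$; and a function $\tilde f_i$ defined on $N \cup \{x : d(x, v) \leq s\}$ by $\tilde f_i|_N := (1 - \delta) g_i$ together with $\tilde f_i \equiv c_i$ on the closed $s$-ball about $v$, where $c_i$ is either any element of the nonempty intersection $\bigcap_{x \in N}[(1 - \delta) g_i(x) - d(x, v) + s,\, (1 - \delta) g_i(x) + d(x, v) - s]$ (if $p \notin N$) or is forced to $(1 - \delta) g_i(p)$ (if $p \in N$, since then $p$ lies in the closed $s$-ball). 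The constraint $s = 2 \eta_0 < \delta \rho / 5$ ensures the intersection is nonempty in the first subcase and yields the Lipschitz inequality between $N \setminus \{p\}$ and the closed $s$-ball in the second; after extending $\tilde f_i$ to $M$ by a McShane extension, it is $1$-Lipschitz.

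To verify $\|\tilde f_i \pm \tilde \varphi\| \leq 1$, the check splits into routine cases --- both points in the closed $s$-ball (where $\tilde f_i$ is constant), or both outside the support of $\tilde \varphi$ --- and a single critical case, a pair $x, y$ with $d(x, v) < \eta_0$ and $d(y, v) > s$, where the estimate
\[
|(\tilde f_i \pm \tilde \varphi)(x) - (\tilde f_i \pm \tilde \varphi)(y)| \leq |c_i - \tilde f_i(y)| + \tilde \varphi(x) \leq (d(y, v) - s) + \eta_0 \leq d(y, v) - \eta_0 \leq d(x, y)
\]
closes the argument thanks to $s \geq 2 \eta_0$. Since $\tilde \varphi$ vanishes on $N$, we get $(\tilde f_i \pm \tilde \varphi)(\mu_i) = (1 - \delta) g_i(\mu_i) = 1 - \delta > 1 - \alpha_i$, hence $\tilde f_i \pm \tilde \varphi \in S_i$, while $\|\tilde \varphi\| = 1 > 1 - \varepsilon$. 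The main obstacle is the case split depending on whether $p \in N$ --- this forces the value of $c_i$ in one subcase and requires a separate Lipschitz verification --- and the balancing of the small parameters $\eta_0, s, \delta, \rho$ that keeps all the interval and Lipschitz inequalities simultaneously satisfied.
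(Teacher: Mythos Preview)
Your handling of (a), (c), and (d) matches the paper exactly. For (b) the paper does not argue directly either: it simply observes that an inspection of the proof of Theorem~2.4 in \cite{BLR_octahedralityLipschitz} yields the conclusion. Your self-contained argument for (b) therefore goes beyond what the paper does, but it has a genuine gap at the first step.

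The claimed ``pigeonhole argument'' producing a Cauchy sequence of distinct points in $M\setminus N$ is false. Take $M=\bigcup_{n\ge 2}\{a_n,b_n\}$ with $d(a_n,b_n)=1/n$ and $d(x,y)=1$ whenever $x,y$ lie in different pairs. This is a bounded metric space with $\inf_{x\neq y}d(x,y)=0$, yet every Cauchy sequence is eventually confined to a single pair $\{a_{n_0},b_{n_0}\}$ and hence takes at most two values; there is no accumulation point even in the completion, so your $p$ and $\rho$ are undefined. The underlying idea---support $\tilde\varphi$ on a close pair and make $\tilde f_i$ constant nearby---still works in this example (take $v=a_n$, $u=b_n$ for $n$ large), but the construction cannot be organised around a limit point; one must argue directly with a close pair $u,v$ and control its position relative to $N$. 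A second, smaller gap: for general $y$ with $d(y,v)>s$ the bound $|c_i-\tilde f_i(y)|\le d(y,v)-s$ does not follow from an arbitrary McShane extension, which only gives $|\tilde f_i(v)-\tilde f_i(y)|\le d(y,v)$. You need to truncate the extension so that it stays in the $1$-Lipschitz envelope $[c_i-(d(\cdot,v)-s)^+,\,c_i+(d(\cdot,v)-s)^+]$; this is possible precisely because $c_i$ was chosen so that $(1-\delta)g_i|_N$ already lies in that envelope.
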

\begin{proof}
(a), (c), and (d) are Propositions~\ref{prop: M unbounded}, \ref{prop: Lip and discrete}, and \ref{prop: Lip(K_2) has w*-SSD2P}, respectively. An inspection of the proof of Theorem~2.4 in \cite{BLR_octahedralityLipschitz} shows (b).
\end{proof}

\section{Questions}\label{sec: Questions}

Let us end the paper with some questions that are suggested by the current work:
\begin{ques}
  If a Banach space has the SSD2P must it then contain an isomorphic copy of $c_0$?
\end{ques}

\begin{ques}
  Does there exist a dual Banach space with the w$^\ast$-SSD2P and without the SSD2P?
\end{ques}


\begin{ques}
  If $M$ has the LTP, does then $\Lip_0(M)$ have the w$^\ast$-SSD2P?
\end{ques}

\section*{Acknowledgements}
The authors wish to express their thanks to Indrek Zolk for his collaboration in proving Lemma~\ref{lemma: K_2}.

\bibliographystyle{amsplain}
\footnotesize

\end{document}